\newtheorem{thm}{Theorem}[section]
\newtheorem{prop}[thm]{Proposition}
\newtheorem{lemma}[thm]{Lemma}
\newtheorem{defin}[thm]{Definition}
\newtheorem{cor}[thm]{Corollary}
\newtheorem{rema}[thm]{Remark}
\newtheorem{exa}[thm]{Example}
\numberwithin{equation}{section}
\newenvironment{rem}{\begin{rema}\rm}{\parbox{2mm}{\hfill}\hfill $\triangle$\end{rema}}
\newenvironment{ex}{\begin{exa}\rm}{\parbox{2mm}{\hfill}\hfill $\triangle$\end{exa}}
\newcommand{\marginnote}[1]{\ifthenelse{\isodd{\thepage}}{\normalmarginpar}
{\reversemarginpar}\marginpar{\fbox{\parbox{18mm}{%\sloppy
\footnotesize #1}}}}
\newcommand{\coker}{\operatorname{coker}}
\newcommand{\Hom}{\operatorname{Hom}}
\newcommand{\End}{\operatorname{End}}
\newcommand{\Ol}{\mathcal{O}}
\newcommand{\R}{\mathcal{R}}
\newcommand{\A}{\mathcal{A}}
\newcommand{\B}{\mathcal{B}}
\newcommand{\G}{\mathcal{G}}
\newcommand{\F}{\mathcal{F}}
\newcommand{\E}{\mathcal{E}}
\newcommand{\U}{\mathcal{U}}
\newcommand{\V}{\mathcal{V}}
\newcommand{\W}{\mathcal{W}}
\newcommand{\K}{\mathcal{K}}
\newcommand{\T}{\mathcal{T}}
\newcommand{\Hg}{\mathcal{H}}
\newcommand{\C}{\mathcal{C}}
\newcommand{\Ml}{\mathscr{M}}
\newcommand{\id}{\operatorname{id}}
\newcommand{\lExt}{\operatorname{\mathcal{E}\mspace{-2mu}\textit{xt}}}
\newcommand{\lHom}{\operatorname{\mathcal{H}\!\textit{om}}}
\newcommand{\lEnd}{\operatorname{\mathcal{E}\mspace{-2mu}\textit{nd}}}
\newcommand{\Z}{\mathbb{Z}}
\newcommand{\Pu}{\mathbb{P}^1}
\newcommand{\Hl}{\mathscr{H}}
\newcommand{\Al}{\mathscr{A}}
\newcommand{\Ext}{\operatorname{Ext}}
\newcommand{\El}{\mathscr{E}}
\newcommand{\Hb}{\mathbb{H}}
\newcommand{\Kl}{\mathscr{K}}
\definecolor{rosso}         {rgb}{1.00 , 0.00 , 0.00}
\newcommand{\modd}[1]{#1\mbox{-\bf mod}}
\newcommand{\Rep}[1]{\mbox{\bf Rep}(#1)}
\title[Homology of twisted quiver  bundles with relations]{Homology of twisted quiver \\[8pt] bundles with relations}
\begin{document}
\date{Revised \today}
\subjclass[2010]{14D20, 16G20, 18G15,  55T99} 
\keywords{Quiver algebras, quiver representations, moduli space, Ext groups}
\thanks{E-mail: {\tt bartocci@dima.unige.it, bruzzo@sissa.it, clsrava@gmail.com}\\ \indent
Research partially supported by {\sc prin}   ``Geometria delle variet\`a  algebriche'', by {\sc gnsaga-in}d{\sc am}, and by the University of Genova through the research grant ``Aspetti matematici nello studio delle interazioni fondamentali''.  
 U.B. is a member of the {\sc vbac} group.}

\maketitle
\markright{\sc C. Bartocci, U. Bruzzo, C.L.S. Rava}
\thispagestyle{empty}

\begin{center}{\sc Claudio Bartocci,$^\P$ Ugo Bruzzo$^{\S\dag\ddag\sharp}$ and Claudio L. S. Rava$^\P$ } \\[10pt]  \small 
$^\P$Dipartimento di Matematica, Universit\`a di Genova, \\ Via Dodecaneso 35, 16146 Genova, Italia \\[3pt]
  $^{\S}$ SISSA (Scuola Internazionale Superiore di Studi Avanzati),\\   Via Bonomea 265, 34136 Trieste, Italia \\[3pt]
  $^\dag$ IGAP  (Institute for Geometry and Physics), Trieste \\[3pt]
  $^\ddag$ INFN (Istituto Nazionale di Fisica Nucleare), Sezione di Trieste \\[3pt] $^\sharp$ Arnold-Regge Center for Algebra, Geometry \\ and Theoretical Physics, Torino
\end{center}
\maketitle

\begin{abstract}  We study the Ext modules in the category of   left modules  over
a twisted algebra of a finite quiver over a ringed space $(X,\Ol_X)$, allowing for the presence of relations. We introduce a spectral sequence which relates the Ext modules in that category with the Ext modules
in the category of $\Ol_X$-modules. Contrary to what happens in the absence of relations, this
spectral sequence in general does not degenerate at the second page. 
We also consider local Ext sheaves. Under suitable hypotheses, the Ext modules are represented as hypercohomology groups.
 \end{abstract}

  \maketitle
 
%  \newpage
 {\small \setcounter{tocdepth}{1}
 \tableofcontents}

\section{Introduction}

In 1977 Barth used monads (3-term complexes of vector bundles that have nonzero cohomology only at their middle term) to construct moduli spaces of rank 2, degree 0 vector bundles on the
projective plane $\mathbb P^2$. That construction was generalized by Barth and Hulek \cite{Hul79,BaHu78}, and in 1985 Dr{\'e}zet and Le Potier \cite{DLP} eventually built  the moduli space of all Gieseker-semistable torsion-free sheaves $\mathbb P^2$. Their work used in a critical way the fact that
the relevant monad is a Kronecker complex, and indeed the moduli space can be built
as a moduli space of semistable Kronecker complexes modulo the action of a reductive group. 
Now  semistable Kronecker complexes can be regarded as semistable representations of
a quiver   with relations, whose moduli spaces were constructed by King  \cite{KiQ}. 
Moreover, moduli spaces of semistable sheaves on $\mathbb P^1\times\mathbb P^1$ were
constructed using quiver moduli spaces in \cite{Mai17}.

Another approach to the use of quiver moduli spaces to construct moduli spaces of sheaves uses the theory of Bridgeland stability conditions \cite{Bri07}, see \cite{Ohk10,ArMi17}.

Quivers have been of particular importance in the study of framed sheaves, whose moduli spaces can be regarded as higher rank generalizations of Hilbert schemes of points, and for some spaces are resolutions of singularities of instanton moduli spaces \cite{Do,Na99}. The description of these spaces by means of the so-called ADHM data indeed often allows one to identify them as (components) of a moduli space of quiver representations \cite{Na94,kuz,bblr1}; the reader may refer to   \cite{BLR1} for a review of this aspect, and to \cite{Ginz} for an introduction to quiver varieties.

Other moduli spaces that have been treated by means of quiver techniques are Higgs bundles, stable pairs and triples, see e.g.~\cite{ACGP}.
  
So, once the significance of the moduli spaces of quiver representations  for moduli problems has been established, it becomes important to study their main properties, in the first instance their deformations. This requires the knowledge of the main homological properties of moduli   of quiver representations. In \cite{GoKi} Gothen and King studied the homological algebra of the category of representations of a twisted quiver  without relations over the category of $\Ol_{X}$-modules on a ringed space $(X,\Ol_{X})$. 
Their main result is a long exact sequence which relates the Ext groups in the category of   representations of the quiver algebra with the Ext groups in the category of $\Ol_X$-modules, thus making the former potentially computable. In this paper we generalize Gothen and King's work   allowing for the presence of relations. We also consider local Ext sheaves in addition to global Ext modules.

Let $Q$ be a quiver, and let $(X,\Ol_X)$ be a ringed space. Let $\B$ be the sheaf of $\Ol_X$-algebras generated by the vertices of the quiver, and let $\Ml$ be a sheaf of $\Ol_X$-modules, graded over the arrows of the quiver; it has a natural $\B$-module structure. The $\Ml$-twisted quiver algebra $\Ml Q$ of $Q$ is the tensor algebra of $\Ml$ over $\B$. The $\Ml$-twisted quiver algebra $Q$ with relations
will be a quotient $\Al = \Ml Q/\Kl$, where $\Kl$ is a sheaf of two-sided ideals (the ideal of relations).
%Moreover we set $\mathscr N_\alpha = \Ml_\alpha/(\Ml_\alpha\cap \Kl)$, and $\mathscr N = \oplus_\alpha \mathscr N_\alpha $. 
We shall  denote by $\Rep{\Ml Q}$ the category of  $\Ml$-twisted representations of $Q$.
%Finally,   let $\mathscr N Q$ be the tensor algebra of $\mathscr N$ over $\B$ (more detailed definitions will be given in the next section).
  
In section \ref{SecRep}, Theorem \ref{ThmRQmod}, we prove, under a technical assumption on the $\Ol_X$-module $\Ml$, that 
the category $\Rep{\Ml Q} $ is equivalent to the category $\modd{\Ml Q}$, thus recovering a result of \cite{ACGP, GoKi} with a mild generalization, due to our slightly weaker assumptions; in particular, $\modd{\Al}$ is a full subcategory of $\Rep{\Ml Q} $. It may be interesting to note that one has an equivalence of categories between a suitable subcategory $\Rep{\Al}$ of $\Rep{\Ml Q}$ and $\modd{\Al}$ even in the case with relations but with a trivial twist. Section \ref{untwisted} studies indeed the untwisted case.

In Section \ref{ss} we address the problem of relating the Ext modules in the category
 $\Al$-{\bf mod} with those in the category $\Ol_X$-{\bf mod}. While in \cite{GoKi}
 this leads to a long exact sequence, here we only find a spectral sequence relating the two Exts, which
 in general does not degenerate at the second step. 
 This happens because the spectral sequence is associated with a first quadrant double complex
 that in general is unbounded in both directions, which in turns depends from the fact that in our case the resolution \eqref{eq-C(V)} is infinite.
 
 Theorem \ref{thm-hyper} generalizes the analogous result from  \cite{GoKi}, namely, the Ext modules --- when their first argument is locally free as an $\Ol_X$-module ---  can be realized as hypercohomology groups. Under the same hypothesis, Corollary \ref{cor-spect-seq-V-loc-free} expresses this result in local form, i.e., for the local Ext sheaves. 
 
 Section \ref{adhm} describes an example  (ADHM sheaves) where a vanishing theorem for the Ext groups can be proved. 
 
 Natural developments of these first results would be a base change formula for the local Exts, 
 and a study of the deformation theory of these moduli spaces, which would allow one to characterize their tangent spaces and tangent sheaf. We shall address these issues in a future publication.
  
 After we wrote a first draft of this paper, the preprint \cite{Moz}  appeared, were some related results are given (there no relations are considered, but the notion of twisting is generalized).  
 
 \smallskip
\noindent{\bf Acknowledgements}. We thank V.~Lanza and F.~Sala for useful discussions. Moreover we thank SISSA and the Department of Mathematics of the University of Genoa for hospitality and support during cross visits among the authors. During the preparation of this manuscript U.B.~was visiting the Instituto de Matem\'atica e Estat\'istica of the University of S\~ao Paulo (IME-USP) supported by the FAPESP grant
 2017/22091-9.
 
 \bigskip
\section{Twisted quiver algebras with relations}

Let $Q=(I,E,h,t)$ be a quiver,  where the set $I$ labels the vertices of $Q$, $E$ labels the arrows, and the maps
$t,h \colon E \to I$ associate with any arrow its origin (tail) and destination (head). We will assume throughout this paper that $Q$ is finite, in the sense that both sets $I$ and $E$ are finite. For each $i\in I$, we denote by $e_i$ the trivial path starting and ending at the vertex $i$. The free Abelian group $\mathbf B$  generated by $ \{e_{i}\}_{i\in I}$ has a natural ring structure given by
\begin{equation}\label{BRelations} e_{i}e_{j}=\delta_{ij}e_{i} \qquad\text{for all}\quad i, j \in I\,.
\end{equation}
Note that  $\sum_{i\in I}e_{i}=1_{\mathbf B}$, so that the elements $\{e_{i}\}_{i\in I}$ are a complete set of orthogonal idempotents. 
We remark that, because of the relations \eqref{BRelations}, one has the identification
\begin{equation}\label{tensorvspath}
T_{\mathbf B} (\bigoplus_{\alpha\in E} \Z\alpha) = \Z Q\,,
\end{equation}
where $T_{\mathbf B}(-)$ is the tensor algebra functor on the category of $\mathbf B$-bimodules and $\Z Q$ is the path algebra of $Q$ over $\Z$.

Let $(X,\Ol_{X})$ be a ringed space.  The free $ \Ol_{X}$-module 
$$\B= \bigoplus_{i\in I}\Ol_{X}e_{i}$$
 generated by $ \{e_{i}\}_{i\in I}$
can be endowed with a structure of a sheaf of $\mathbf B$-algebras by imposing conditions that are formally the same as those in eq.~\eqref{BRelations}.

Consider now a collection $\{\Ml_{\alpha}\}_{\alpha\in E}$ of  $\Ol_{X}$-modules labeled by the arrows of  $Q$. Each 
$\Ml_{\alpha}$ can be endowed  with a structure of $\B$-bimodule in the following way: for every open subset $U\subseteq X$ and for every section $x\in\Ml_{\alpha}(U)$, 
\begin{equation*}
e_{i}x=
\begin{cases}
x &\quad \text{if $h(\alpha)=i$}\\
0 &\quad \text{otherwise}
\end{cases}\qquad,\qquad
xe_{i}=
\begin{cases}
x &\quad \text{if $t(\alpha)=i$}\\
0 &\quad \text{otherwise}
\end{cases}
\end{equation*}
\begin{defin}
\label{def-MQ} Let $\Ml=\bigoplus_{\alpha\in E}\Ml_{\alpha}$. The $\Ml$-twisted quiver algebra of  $Q$ is the tensor algebra of 
$\Ml$ over $\B$:
\begin{equation}
\Ml Q=T_{\B}\Ml\,.
\end{equation}

\end{defin}

In the particular case where $\Ml_{\alpha}=\Ol_{X}\alpha$ for all $\alpha\in E$, 
there is the following isomorphism of $\B$-algebras, which generalises that in eq.~\eqref{tensorvspath}, 
\begin{equation}
T_{\B} (\mbox{$\bigoplus_{\alpha\in E}$}\Ol_X{\alpha}) \simeq \Ol_{X}Q\,,
\end{equation}
where $\Ol_{X}Q= \Ol_X \otimes_{\Z} \Z Q$ is the path algebra of $Q$ over $\Ol_{X}$.

Let $\K\subset\Ml Q$ be a sheaf of two-sided ideals in the $\Ml$-twisted quiver algebra of  $Q$. 
\begin{defin}
The {\em $\Ml$-twisted quiver algebra of $Q$ over $X$ with relations $\Kl$} is the quotient  $\Al=\Ml Q/\Kl$.
\end{defin}
We shall denote by $\pi$ the projection $\Ml Q\to\Al$.

Let  $\V$ be a left $\Ml Q$-module.
If we set 
\begin{equation}\label{Vi} \V_i =  e_i \V \end{equation}
 for each $i \in I$, then $\V_i$ is an $\Ol_X$-module and, since the $e_i$ are mutually orthogonal,
\begin{equation}
\V \simeq \bigoplus_{i\in I} \V_i\,.
\label{eq-V=sum_iV_i}
\end{equation}
For each $\alpha\in E$ the presheaf morphism 
\begin{equation}
\begin{aligned}
\Ml_\alpha(U) \otimes_{\Ol_X(U)} \V_{t(\alpha)}(U) &\longrightarrow \V_{h(\alpha)}(U)\\
m \otimes v & \longmapsto  m \,v 
\end{aligned}
\label{eq-def-rho_alpha}
\end{equation}
induces a morphism $\rho_{\V,\alpha}\colon \Ml_\alpha \otimes_{\Ol_X} \V_{t(\alpha)} \longrightarrow \V_{h(\alpha)}$ of $\Ol_X$-modules.

\begin{defin} A left $\Ml Q$-module $\V$ satisfies condition {\em (TP)} if every point $x\in X$ has an open neighbourhood $U$ such that for all $\alpha\in E$  the natural morphism 
\begin{equation} \label{TPiso}
\Ml_\alpha (U) \otimes_{\Ol_X(U)} \V_{t(\alpha)}(U) \longrightarrow \left(\Ml_\alpha   \otimes_{\Ol_X} \V_{t(\alpha)}\right)(U) 
\end{equation}
is an isomorphism.  
\end{defin}

\begin{rem}\label{TPrem}
Situations where this condition is satisfied are  for instance the following:
\begin{itemize} \setlength\itemsep{0.5em}
\item the $\Ol_X$-modules $\Ml_\alpha$ or the  $\Ol_X$-modules $\V_i$ are locally projective; 
\item $X$ is a scheme, and $\Ml_\alpha$ and $\V_i$ are quasi-coherent  $\Ol_X$-modules.
\end{itemize}
\end{rem}
\begin{lemma} 
\label{lm-Kergamma}
Let $\V$, $\W$ be left $\Ml Q$-modules which satisfy condition {\em (TP)}.
Let 
\begin{equation} \gamma_{\V,\W}\colon \bigoplus_{i\in I} \lHom_{\Ol_X}(\V_i, \W_i) \longrightarrow \bigoplus_{\alpha\in E} \lHom_{ \Ol_X}(\Ml_\alpha\otimes_{\Ol_X} \V_{t(\alpha)}, \W_{h(\alpha)})
\end{equation}
be the morphism locally defined by 
\begin{equation}
\gamma_{\V,\W} \bigl(\{f_i\}_{i\in  I}\bigr) = \bigl\{ f_{h(\alpha)} \circ \rho_{\V,\alpha} -  \rho_{\W,\alpha}\circ(\id_{\Ml_\alpha}\otimes f_{t(\alpha)}) \bigr\}_{\alpha \in E}\,,
\label{eq-def-gamma}
\end{equation}
 where $\{f_i\}_{i\in  I}$ is a section on $U$. Then  there is a natural identification $$\ker \gamma_{\V,\W} = \lHom_{\Ml Q}(\V, \W)\,.$$
\end{lemma}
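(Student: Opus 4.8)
The plan is to identify both $\ker\gamma_{\V,\W}$ and $\lHom_{\Ml Q}(\V,\W)$ with subsheaves of $\lHom_{\Ol_X}(\V,\W)\simeq\bigoplus_{i\in I}\lHom_{\Ol_X}(\V_i,\W_i)$ and then to check that the two defining conditions coincide.

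First I would record the purely $\B$-linear part of the statement. Since the $e_i$ form a complete set of orthogonal idempotents, an $\Ol_X$-linear morphism $\phi\colon\V\to\W$ is $\B$-linear if and only if it commutes with multiplication by each $e_i$, i.e.\ if and only if $\phi(\V_i)\subseteq\W_i$ for every $i\in I$ (recall $\V_i=e_i\V$ by \eqref{Vi} and $\V\simeq\bigoplus_i\V_i$ by \eqref{eq-V=sum_iV_i}). Writing $f_i=\phi|_{\V_i}$ thus gives a natural isomorphism of sheaves $\lHom_{\B}(\V,\W)\simeq\bigoplus_{i\in I}\lHom_{\Ol_X}(\V_i,\W_i)$, and under it $\lHom_{\Ml Q}(\V,\W)$ sits inside the right-hand side as the subsheaf of those families $\{f_i\}$ whose associated $\phi$ is moreover $\Ml Q$-linear.

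Next I would invoke the universal property of the tensor algebra $\Ml Q=T_\B\Ml$: a left $\Ml Q$-module structure on a $\B$-module $\V$ is the same datum as a $\B$-bimodule morphism $\rho_\V\colon\Ml\otimes_\B\V\to\V$, and because $\Ml$ is graded over $E$ with the $\B$-bimodule structures specified before Definition \ref{def-MQ} one has $\Ml\otimes_\B\V\simeq\bigoplus_{\alpha\in E}\Ml_\alpha\otimes_{\Ol_X}\V_{t(\alpha)}$, with the $\alpha$-summand landing in $\V_{h(\alpha)}$; hence $\rho_\V$ decomposes precisely as the collection $\{\rho_{\V,\alpha}\}_{\alpha\in E}$ of \eqref{eq-def-rho_alpha}. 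A $\B$-linear $\phi=\{f_i\}$ is then $\Ml Q$-linear if and only if it intertwines $\rho_\V$ and $\rho_\W$, i.e.\ $\phi\circ\rho_\V=\rho_\W\circ(\id_\Ml\otimes\phi)$, which decomposed over the arrows reads $f_{h(\alpha)}\circ\rho_{\V,\alpha}=\rho_{\W,\alpha}\circ(\id_{\Ml_\alpha}\otimes f_{t(\alpha)})$ for all $\alpha\in E$, that is, $\gamma_{\V,\W}(\{f_i\})=0$ by \eqref{eq-def-gamma}.

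The last point — and the one needing genuine care — is the compatibility between this global characterisation and the fact that $\gamma_{\V,\W}$ is only given through its local formula \eqref{eq-def-gamma}. Here condition (TP) is exactly what is needed: on a neighbourhood $U$ as in its definition, the isomorphism \eqref{TPiso} lets one regard every section of $\Ml_\alpha\otimes_{\Ol_X}\V_{t(\alpha)}$ over $U$ as coming from $\Ml_\alpha(U)\otimes_{\Ol_X(U)}\V_{t(\alpha)}(U)$, so that $\id_{\Ml_\alpha}\otimes f_{t(\alpha)}$ is unambiguously defined on such sections and \eqref{eq-def-gamma} makes sense; since such $U$ cover $X$, this pins down $\gamma_{\V,\W}$ as a morphism of sheaves. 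One then checks that a family $\{f_i\}$ over $U$ lies in $\ker\gamma_{\V,\W}$ precisely when the corresponding $\phi$ respects the $\Ml$-action on $U$, and — using (TP) again together with \eqref{eq-V=sum_iV_i} and the universal property — that this is equivalent to $\phi$ being a section of $\lHom_{\Ml Q}(\V,\W)$ over $U$. Varying $U$ yields the asserted natural identification $\ker\gamma_{\V,\W}=\lHom_{\Ml Q}(\V,\W)$. I expect the main obstacle to be exactly this sheaf-theoretic bookkeeping around (TP), rather than any algebraic subtlety.
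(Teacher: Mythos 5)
Your argument is correct and is essentially the paper's own proof in slightly more structural clothing: the paper likewise first embeds $\lHom_{\Ml Q}(\V,\W)$ into $\bigoplus_{i\in I}\lHom_{\Ol_X}(\V_i,\W_i)$ via the orthogonal idempotents, then tests $\Ml Q$-linearity only against the degree-one part $\Ml$ (since $\Ml Q$ is generated by $\Ml$ over $\B$), and invokes (TP) at exactly the same point, namely to identify the condition on simple tensors of sections with the vanishing of the sheaf morphism $f_{h(\alpha)}\circ\rho_{\V,\alpha}-\rho_{\W,\alpha}\circ(\id_{\Ml_\alpha}\otimes f_{t(\alpha)})$. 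The only difference is presentational: where you appeal to the universal property of $T_{\B}\Ml$ and the decomposition $\Ml\otimes_{\B}\V\simeq\bigoplus_{\alpha\in E}\Ml_\alpha\otimes_{\Ol_X}\V_{t(\alpha)}$, the paper carries out the equivalent explicit computation on sections $m=\oplus_{\alpha}m_\alpha$, $v=\oplus_{i}v_i$.
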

\begin{proof}
$\lHom_{\Ml Q}(\V, \W)$ can be embedded  into  $\bigoplus_{i\in I} \lHom_{\Ol_X}(\V_i, \W_i)$ as an $\Ol_X$-module. In fact for every   $\Ml Q$-linear  morphism $f \colon \V \to \W$ one has 
$
f(\V_i) \subseteq \W_i
$.
Let $U$ be an open set in which condition (TP) holds 
and  let $g = \oplus_{i\in I} g_i \in \bigoplus_{i\in I}\lHom_{\Ol_X}(\V_i, \W_i)(U)$. Since $\Ml Q$ is generated by $\Ml$,  the morphism $g$ is 
$\Ml Q$-linear if and only if 
\begin{equation}
g(m\,v) = m\, g(v)
\label{eq-g-A-lin}
\end{equation} 
for each open subset $U'\subset U$, each $m\in \Ml(U')$ and each 
$v\in \V(U')$. From the definition of $\Ml$ and from eq.~\eqref{eq-V=sum_iV_i} it follows that
\begin{equation}
m=\oplus_{\alpha\in E}m_{\alpha}\,, \qquad v=\oplus_{i\in I}v_{i}\,,
\label{eq-exp-n-and-v}
\end{equation}
with $m_{\alpha}\in\Ml_{\alpha}(U')$ and $v_{i}\in \V_{i}(U')$. By substituting eq.~\eqref{eq-exp-n-and-v} into eq.~\eqref{eq-g-A-lin} one gets
\begin{align}
0&=   g(m\,v) - m\, g(v) = \bigoplus_{\begin{subarray}{c} \alpha\in E\\ i\in I \end{subarray}}\big(g_{h(\alpha)}( m_{\alpha}\,v_{i}) - m_{\alpha}\, g_{i}(v_{i})\big)\\
&=\bigoplus_{\alpha\in E}\big(g_{h(\alpha)}( m_{\alpha}\,v_{t(\alpha)}) - m_{\alpha}\,g_{t(\alpha)}(v_{t(\alpha)})\big)\\
&= \bigoplus_{\alpha\in E}\big(g_{h(\alpha)}(\rho_{\V,\alpha}(m_{\alpha}\otimes v_{t(\alpha)}) - \rho_{\W,\alpha}(m_{\alpha}\otimes g_{t(\alpha)}(v_{t(\alpha)}))\big) \label{star}  \\
&=\bigoplus_{\alpha\in E}\big[\big(g_{h(\alpha)} \circ \rho_{\V,\alpha} -  \rho_{\W,\alpha}\circ(\id_{\Ml_\alpha}\otimes g_{t(\alpha)}) \big)(m_{\alpha}\otimes v_{t(\alpha)})\big]\,,
\end{align}
where  the equality  \eqref{star} is due to the property (TP). The thesis follows.
\end{proof}

\begin{cor} \label{rem-MQ-NQ-A}  Let $\V$, $\W$ be left $\Al$-modules which  with the induced $\Ml Q$-module structure satisfy condition {\em (TP)}. Then  there is a natural identification $$\ker \gamma_{\V,\W} = \lHom_{\Al}(\V, \W)\,.$$

\end{cor}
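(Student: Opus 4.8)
The plan is to deduce Corollary \ref{rem-MQ-NQ-A} from Lemma \ref{lm-Kergamma} by showing that, for $\Al$-modules $\V$ and $\W$, the inclusion $\lHom_{\Al}(\V,\W)\subseteq\lHom_{\Ml Q}(\V,\W)$ is in fact an equality. The point is that the $\Ml Q$-module structure on an $\Al$-module factors through the projection $\pi\colon\Ml Q\to\Al$, so any $\Ml Q$-linear morphism between $\Al$-modules is automatically $\Al$-linear.

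First I would note that an $\Al$-linear morphism is in particular $\Ml Q$-linear, via restriction of scalars along $\pi$; conversely, if $f\colon\V\to\W$ is $\Ml Q$-linear and $a\in\Al(U')$ is a local section, choose (locally, after shrinking $U'$ if necessary, since $\pi$ is surjective as a sheaf map) a lift $\tilde a\in\Ml Q(U')$ with $\pi(\tilde a)=a$; then $f(av)=f(\tilde a v)=\tilde a f(v)=a f(v)$ for every local section $v$ of $\V$, where the outer equalities use that the $\Al$-action is obtained from the $\Ml Q$-action through $\pi$, and the middle one uses $\Ml Q$-linearity of $f$. Since this holds on a cover, $f$ is $\Al$-linear as a sheaf morphism. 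Hence $\lHom_{\Al}(\V,\W)=\lHom_{\Ml Q}(\V,\W)$ as subsheaves of $\bigoplus_{i\in I}\lHom_{\Ol_X}(\V_i,\W_i)$.

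Then I would simply invoke Lemma \ref{lm-Kergamma}: since $\V$ and $\W$, viewed as $\Ml Q$-modules, satisfy condition (TP) by hypothesis, the lemma gives $\ker\gamma_{\V,\W}=\lHom_{\Ml Q}(\V,\W)$, and combining with the identification above yields $\ker\gamma_{\V,\W}=\lHom_{\Al}(\V,\W)$, which is the claim. One should also observe that $\gamma_{\V,\W}$ only involves the $\Ol_X$-module structure on the $\V_i$, $\W_i$ and the maps $\rho_{\V,\alpha}$, $\rho_{\W,\alpha}$, all of which are the same whether one regards $\V,\W$ as $\Ml Q$- or $\Al$-modules, so the morphism $\gamma_{\V,\W}$ is unambiguous here.

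The only mild subtlety — and it is genuinely minor — is the sheaf-theoretic one: surjectivity of $\pi$ as a morphism of sheaves does not give surjectivity on sections over a fixed open set, so the lift $\tilde a$ of $a$ must be taken locally, and one then checks the defining identity of $\Al$-linearity on the resulting cover. Since $\lHom$ is a sheaf, checking the condition locally suffices, so this causes no real difficulty. Everything else is a direct transcription of the proof of Lemma \ref{lm-Kergamma}.
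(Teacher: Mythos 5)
Your proposal is correct and matches the paper's (implicit) reasoning: the corollary is deduced from Lemma \ref{lm-Kergamma} precisely by observing that, since the $\Ml Q$-action on an $\Al$-module factors through $\pi\colon\Ml Q\to\Al$, one has $\lHom_{\Ml Q}(\V,\W)=\lHom_{\Al}(\V,\W)$ — the same identification the paper invokes later in eq.~\eqref{eq-iso-cokergamma-Hom_A} ``because $\Al$ is a quotient of $\Ml Q$.'' Your care with taking local lifts of sections of $\Al$ is a correct, if minor, refinement of that argument.
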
 

We now prove another property of the morphism $\gamma_{\V,\W}$ (cf.~\eqref{eq-def-gamma}), which will be helpful in the proof of Lemma \ref{lm-elem-ex-seq-V}. Note that, if $\V$ and $\W$ are left $\Ml Q$-modules, and  $\V$ is   also a right $\A$-module, the sheaves
\begin{equation}
\bigoplus_{i\in I} \lHom_{\Ol_X}(\V_i, \W_i) \qquad\text{and}\qquad \bigoplus_{\alpha\in E} \lHom_{ \Ol_X}(\Ml_\alpha\otimes_{\Ol_X} \V_{t(\alpha)}, \W_{h(\alpha)})
\end{equation}
have an induced structure of left $\Al$-modules: in fact, if $i\in I$ and if $U'\subseteq U\subseteq X$ are open subsets, it is enough to set  
\begin{equation}
(af)_i(v_{i})=f_{i}(v_{i}a)\,.
\end{equation}
for every $f_{i}\in\lHom_{\Ol_X}(\V_i, \W_i)(U)$, every $a\in\Al(U)$, and every $v_{i}\in\V_{i}(U')$.
\begin{lemma}
\label{lm-gamma-A-lin}
Let $\V$ and $\W$ be left $\Ml Q$-modules, and assume also that $\V$ is a right $\A$-module.
If condition  {\em (TP)}   is satisfied,   the morphism $\gamma_{\V,\W}$ is $\Al$-linear.
\end{lemma}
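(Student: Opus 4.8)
The plan is to verify $\Al$-linearity by a direct local computation, in the same spirit as the proof of Lemma~\ref{lm-Kergamma}. Since $\gamma_{\V,\W}$ is already a morphism of $\Ol_X$-modules, the only thing to check is that it commutes with the two left $\Al$-actions recalled just before the statement. Let me first write those actions out explicitly: on the source the action is $(af)_i(v_i)=f_i(v_i a)$, while on the target $\bigoplus_{\alpha\in E}\lHom_{\Ol_X}(\Ml_\alpha\otimes_{\Ol_X}\V_{t(\alpha)},\W_{h(\alpha)})$ the analogous (implicit) formula is $(ag)_\alpha(m_\alpha\otimes v_{t(\alpha)})=g_\alpha\bigl(m_\alpha\otimes (v_{t(\alpha)}a)\bigr)$, for $a$ a local section of $\Al$. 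For these to make sense one needs that right multiplication by $a$ carries $\V_i$ into $\V_i$ and is $\Ol_X$-linear there; this is guaranteed by the compatibility of the left $\Ml Q$-action and the right $\A$-action on $\V$ (so that the idempotent $e_i$, acting on the left, commutes with $a$, acting on the right), a compatibility that is already implicit in the construction of these $\Al$-module structures.

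The identity to prove, $\gamma_{\V,\W}(a\cdot\{f_i\})=a\cdot\gamma_{\V,\W}(\{f_i\})$, is local, so I would verify it on sections over a basis of open sets $U'$ on which condition~{\em (TP)} holds. On such a $U'$, by~{\em (TP)} every section of $\Ml_\alpha\otimes_{\Ol_X}\V_{t(\alpha)}$ is locally a finite sum of elementary tensors $m_\alpha\otimes v_{t(\alpha)}$ with $m_\alpha\in\Ml_\alpha(U')$ and $v_{t(\alpha)}\in\V_{t(\alpha)}(U')$; since both sides of the identity are $\Ol_X$-linear it is enough to test their $\alpha$-components on such tensors. This is the only place where~{\em (TP)} enters, and it is the same reduction as in Lemma~\ref{lm-Kergamma}.

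It then remains to expand. Writing $m_\alpha v$ for $\rho_{\V,\alpha}(m_\alpha\otimes v)$ and using \eqref{eq-def-gamma} together with the two action formulas,
\begin{align*}
\gamma_{\V,\W}(a\cdot\{f_i\})_\alpha(m_\alpha\otimes v_{t(\alpha)})
&=(af)_{h(\alpha)}(m_\alpha\, v_{t(\alpha)})-\rho_{\W,\alpha}\bigl(m_\alpha\otimes(af)_{t(\alpha)}(v_{t(\alpha)})\bigr)\\
&=f_{h(\alpha)}\bigl((m_\alpha\, v_{t(\alpha)})\,a\bigr)-\rho_{\W,\alpha}\bigl(m_\alpha\otimes f_{t(\alpha)}(v_{t(\alpha)}a)\bigr),\\
\bigl(a\cdot\gamma_{\V,\W}(\{f_i\})\bigr)_\alpha(m_\alpha\otimes v_{t(\alpha)})
&=\gamma_{\V,\W}(\{f_i\})_\alpha\bigl(m_\alpha\otimes(v_{t(\alpha)}a)\bigr)\\
&=f_{h(\alpha)}\bigl(m_\alpha\,(v_{t(\alpha)}a)\bigr)-\rho_{\W,\alpha}\bigl(m_\alpha\otimes f_{t(\alpha)}(v_{t(\alpha)}a)\bigr).
\end{align*}
The second summands agree verbatim, and the first summands agree because $(m_\alpha\, v_{t(\alpha)})\,a=m_\alpha\,(v_{t(\alpha)}a)$, again by the compatibility of the left $\Ml Q$- and right $\A$-module structures on $\V$. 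Hence the two sides coincide on elementary tensors, so by~{\em (TP)} on all local sections, and $\gamma_{\V,\W}$ is $\Al$-linear.

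I do not anticipate any real difficulty: this is essentially a bookkeeping lemma. The only items that call for a little care are making the implicit left $\Al$-module structure on the target sheaf explicit, and recording that the right $\A$-action on $\V$ respects the decomposition $\V=\bigoplus_{i\in I}\V_i$ — both handled by the compatibility remark above — after which the verification is the short computation displayed.
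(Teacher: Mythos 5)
Your proposal is correct and follows essentially the same route as the paper's proof: a local computation on elementary tensors $m_\alpha\otimes v_{t(\alpha)}$ over an open set where condition (TP) holds, expanding both $\gamma_{\V,\W}(a\cdot f)$ and $a\cdot\gamma_{\V,\W}(f)$ via the definition \eqref{eq-def-gamma} and the action $(af)_i(v_i)=f_i(v_ia)$, and matching terms using the bimodule identity $(m_\alpha v_{t(\alpha)})a=m_\alpha(v_{t(\alpha)}a)$. The only difference is presentational: you spell out the left $\Al$-action on the target sheaf and the bimodule compatibility explicitly, which the paper leaves implicit.
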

\begin{proof}
To make the notation more agile we set $\gamma=\gamma_{\V,\W}$. 
Let $U$ be an open set in which condition (TP) holds. 
Let $f = \oplus_{i\in I} f_i \in \bigoplus_{i\in I}\lHom_{\Ol_X}(\V_i, \W_i)(U)$, let $U'$ be any open subset of $U$, and let $a\in\Al(U')$, $m_{\alpha}\in\Ml_{\alpha}(U')$ and $v_{i}\in\V_{i}(U')$, with $\alpha\in E$ and $i\in I$. Then
\begin{equation}
\begin{aligned}
(\gamma(af))_{\alpha}(m_{\alpha}\otimes v_{t(\alpha)})&=(af)_{h(\alpha)}(m_{\alpha}\,v_{t(\alpha)})- m_{\alpha}\,(af)_{t(\alpha)}(v_{t(\alpha)})=\\
&=f_{h(\alpha)}( m_{\alpha}\,v_{t(\alpha)}a)- m_{\alpha}\, f_{t(\alpha)}(v_{t(\alpha)}a)=\\
&=\big((f_{h(\alpha)}\circ\rho_{\V,\alpha})-\rho_{\W,\alpha}\circ(\id_{\Ml_{\alpha}}\otimes f_{t(\alpha)})\big)(m_{\alpha}\otimes v_{t(\alpha)}a)=\\
&=(\gamma(f))_{\alpha}(m_{\alpha}\otimes v_{t(\alpha)}a)=(a\gamma(f))_{\alpha}(m_{\alpha}\otimes v_{t(\alpha)})\,,
\end{aligned}
\end{equation}
where in the first and in the third step we have used the isomorphism \eqref{TPiso}.
\end{proof}
 \bigskip\section{Representations of a twisted quiver algebra}

\label{SecRep}
Let $\Al$ be an $\Ml$-twisted quiver algebra with relations as above.
We introduce a category $\Rep{\Ml Q}$, whose objects we   call ``$\Ml$-twisted representations of $Q$,'' and contains the category of left $\Al$-modules as a full subcategory.

\begin{defin}\label{def-A-rep}
An object of  $\Rep{\Ml Q}$
is a pair $\big(\{\V_{i}\}_{i\in I},\{\phi_{\alpha}\}_{\alpha\in E}\big)$, where  each $\V_{i}$ is an $\Ol_{X}$-module and each 
\begin{equation}
\phi_{\alpha}\colon\Ml_{\alpha}\otimes_{\Ol_{X}}\V_{t(\alpha)}\longrightarrow\V_{h(\alpha)}
\end{equation}
is a morphism of $\Ol_{X}$-modules.

A morphism
$
f\colon  \big(\{\V_{i}\},\{\phi_{\alpha}\}\big) \longrightarrow 
\big(\{\W_{i}\},\{\psi_{\alpha}\}\big)$
is a collection of morphisms $$\{f_{i}\colon\V_{i}\longrightarrow\W_{i}\}_{i\in I}$$ of $\Ol_{X}$-modules such that the diagram
\begin{equation}
\xymatrix{
\Ml_{\alpha}\otimes_{\Ol_{X}}\V_{t(\alpha)} \ar[r]^-{\phi_{\alpha}} \ar[d]_{\id\otimes f_{t(\alpha)}} & \V_{h(\alpha)} \ar[d]^{f_{h(\alpha)}}\\
\Ml_{\alpha}\otimes_{\Ol_{X}}\W_{t(\alpha)} \ar[r]^-{\psi_{\alpha}}  & \W_{h(\alpha)} 
}
\label{eq-comm-dia-f_i}
\end{equation}
 commutes for each  $\alpha\in E$.
\end{defin}

We need to extend condition (TP) to the category $\Rep{\Ml Q}$ of $\Ml$-twisted representations of $Q$.
\begin{defin} An $\Ml$-twisted representation $\big(\{\V_{i}\},\{\phi_{\alpha}\}\big)$ of $Q$
 satisfies condition {\em (TP)} if every point $x\in X$ has an open neighbourhood $U$ such that for all $\alpha\in E$  the natural morphism \eqref{TPiso}  is an isomorphism.  
\end{defin}

The following theorem
is main result of this section. It generalizes Proposition 2.3  in \cite{GoKi} and clarifies the relation between the category
$\Rep{\Ml Q} $ and the category ${\Al}$-{\bf mod} of left $\Al$-modules. 
\begin{thm}
\label{ThmRQmod}
If  the $\Ol_X$-modules $\Ml_{\alpha}$  are locally projective,  the categories $\Rep{\Ml Q} $ and $\modd{\Ml Q}$
are equivalent.   In particular, $\modd{\Al}$ is a full subcategory of $\Rep{\Ml Q} $.
\end{thm}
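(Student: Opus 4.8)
The plan is to construct an explicit functor $\Phi\colon \modd{\Ml Q} \to \Rep{\Ml Q}$ and an explicit functor $\Psi\colon \Rep{\Ml Q} \to \modd{\Ml Q}$, and then to verify that they are quasi-inverse to each other. For the functor $\Phi$: given a left $\Ml Q$-module $\V$, set $\V_i = e_i\V$ as in \eqref{Vi}, so that $\V \simeq \bigoplus_{i\in I}\V_i$ by \eqref{eq-V=sum_iV_i}, and let $\phi_\alpha = \rho_{\V,\alpha}$ be the structural morphism from \eqref{eq-def-rho_alpha}. On morphisms, an $\Ml Q$-linear $f\colon \V\to\W$ restricts to $f_i\colon\V_i\to\W_i$, and the commutativity of \eqref{eq-comm-dia-f_i} is exactly the $\Ml$-linearity condition, as already recorded in the proof of Lemma \ref{lm-Kergamma}. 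This gives a well-defined functor without any hypothesis on the $\Ml_\alpha$.

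For the functor $\Psi$: given $(\{\V_i\},\{\phi_\alpha\})$, put $\V = \bigoplus_{i\in I}\V_i$, which carries the natural $\B$-module structure (with $e_i$ acting as the projection onto $\V_i$). To make $\V$ an $\Ml Q = T_\B\Ml$-module it is enough to specify the action of $\Ml$ on $\V$, i.e.\ a $\B$-bimodule morphism $\Ml\otimes_\B\V \to \V$, and then extend to the tensor algebra by the universal property. The action of a section $m_\alpha$ of $\Ml_\alpha$ on a section $v_i$ of $\V_i$ should be $\phi_\alpha(m_\alpha\otimes v_i)$ if $i = t(\alpha)$ and $0$ otherwise; this is where condition (TP) enters, since one needs to pass from the presheaf-level pairing $\Ml_\alpha(U)\otimes_{\Ol_X(U)}\V_{t(\alpha)}(U)$ to a morphism out of the sheaf tensor product $\Ml_\alpha\otimes_{\Ol_X}\V_{t(\alpha)}$, and to check it is compatible with restriction and glues to a genuine $\Ml Q$-module structure. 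Here the hypothesis that the $\Ml_\alpha$ are locally projective guarantees, via Remark \ref{TPrem}, that every $\Ml Q$-module (in particular $\V$) automatically satisfies (TP), so the two constructions take place within the same class of objects; one should also note that local projectivity makes $\Ml_\alpha\otimes_{\Ol_X}(-)$ exact, which is needed for $\Psi$ to be well-behaved on morphisms and exact sequences. On morphisms, a representation-morphism $\{f_i\}$ assembles to $f = \bigoplus f_i\colon \V\to\W$, and commutativity of \eqref{eq-comm-dia-f_i} yields $\Ml$-linearity, hence $\Ml Q$-linearity of $f$ by the generation argument of Lemma \ref{lm-Kergamma}.

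The verification that $\Phi$ and $\Psi$ are quasi-inverse is then essentially formal: $\Psi\circ\Phi$ sends $\V$ to $\bigoplus_i e_i\V$ with the reassembled action, and the canonical isomorphism \eqref{eq-V=sum_iV_i} is $\Ml Q$-linear and natural; $\Phi\circ\Psi$ sends $(\{\V_i\},\{\phi_\alpha\})$ to $(\{e_i(\bigoplus_j\V_j)\},\{\rho_\alpha\})$, and $e_i(\bigoplus_j\V_j) = \V_i$ on the nose while $\rho_\alpha = \phi_\alpha$ by construction (modulo the (TP) identification \eqref{TPiso}), giving the identity natural transformation. Finally, the last assertion follows because $\modd{\Al}$ embeds into $\modd{\Ml Q}$ as a full subcategory via restriction of scalars along $\pi\colon\Ml Q\to\Al$ (fullness holds since $\Al$ is a quotient of $\Ml Q$, so $\Al$-linear and $\Ml Q$-linear maps between $\Al$-modules coincide), and this composes with the equivalence $\modd{\Ml Q}\simeq\Rep{\Ml Q}$.

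The main obstacle I expect is not the categorical bookkeeping but the sheaf-theoretic point inside the construction of $\Psi$: turning the collection of $\Ol_X$-module morphisms $\phi_\alpha$ defined on the sheaf tensor products into an honest module structure over the sheaf of algebras $\Ml Q$ requires working locally on a cover where (TP) holds, checking that the resulting local actions are independent of the chosen cover and patch together, and confirming that the tensor-algebra relations (associativity over $\B$) are respected — all of which rests on the local projectivity hypothesis to control $\Ml_\alpha\otimes_{\Ol_X}(-)$ and to ensure (TP) is available for every module in sight. Once that is in place, everything else reduces to the computations already performed in Lemma \ref{lm-Kergamma}.
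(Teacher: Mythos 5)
Your proposal is correct and follows essentially the same route as the paper: the functor $\Phi$ is the paper's $F$, and your $\Psi$ --- defining the $\Ml Q$-module structure on $\bigoplus_{i\in I}\V_i$ by specifying the action of the generators $\Ml$ and extending via the universal property of the tensor algebra, with (TP) supplied by local projectivity of the $\Ml_\alpha$ --- is exactly the paper's construction of $G$ via the algebra morphism $\rho_\phi\colon\Ml Q\to\lEnd_{\Ol_X}(\V')$. You also correctly identify the sheaf-theoretic point (passing from the presheaf pairing to the sheaf tensor product) as the only place where the hypothesis is genuinely used.
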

\begin{proof}
A functor $F\colon\modd{\Ml Q}\longrightarrow\Rep{\Ml Q}$ is defined as follows: for each left $\Ml Q$-module $\V$  set
\begin{equation}
F(\V)=(\{\V_{i}\}_{i\in I},\{\rho_{\V,\alpha}\}_{\alpha\in E})
\end{equation}
(see eqs.~\eqref{Vi} and \eqref{eq-def-rho_alpha})
  and for each morphism $f\colon\V\longrightarrow\W$ of left $\Ml Q$-modules let
\begin{equation}
F(f)=\{f|_{\V_{i}}\}_{i\in I}\,.
\end{equation}
Note that  by Lemma
 \ref{rem-MQ-NQ-A}  $F(f)$ is indeed a morphism in the category $\Rep{\Ml Q}$.

Since $\Ml Q$ is freely generated by $\Ml$ over $\B$, any  $\B$-algebra homomorphism $\chi\colon \Ml Q\longrightarrow \R$ is uniquely determined
by its restriction $\chi_1 \colon \Ml \longrightarrow \R$. 
We want to take $\R = \lEnd_{\B}(\V')$, where $\V'= \bigoplus_{i\in I}\V_{i}$ is given a left $\B$-module structure by letting $e_j u = \delta_{ij} u$
if $u$ is a section of $\V_i$. 
To define $\chi_1$, if 
 $U$ is a small enough open set in $X$,   for each section $m_{\alpha}\in\Ml_{\alpha}(U)$ and for each open subset $U'\subseteq U$, 
$
\chi_1(m_{\alpha})\colon\V_{t(\alpha)}(U')  \longrightarrow \V_{h(\alpha)}(U')$ is given by
\begin{equation}
\chi_1(m_{\alpha})(v) =  \phi_{\alpha}\big(m_{\alpha}|_{U'}\otimes_{\Ol_{X}(U')}v\big)
\end{equation}
where we have made   use of the isomorphism \eqref{TPiso}, which holds true because the $\Ol_X$-modules $\Ml_\alpha$
are locally projective.

Moreover, we define an $\Ol_X$-algebra morphism 
\begin{equation}\label{eq-rho_phi}
\rho_\phi\colon\Ml Q\to  \lEnd_{\Ol_{X}}(\V')\end{equation}
 by composing $\chi$ with
the inclusion $\lEnd_{\B}(\V') \hookrightarrow   \lEnd_{\Ol_{X}}(\V')$.

Thus, we can map an object $(\V,\phi)$ of $\Rep{\Ml Q}$ to $\V'$, which is a the left $\Ml Q$-module  
via the morphism $\rho_\phi$. 
In this way, we obtain a functor  
\begin{equation}
G\colon\Rep{\Ml Q}\longrightarrow\modd{\Ml Q}\,,
\label{eq-funct-G}
\end{equation}
whose action on morphisms is  
\begin{equation} G(\{f_{i}\})=\oplus_{i\in I}f_{i}
\label{eq-G((f_i))}
\end{equation}
if $\{f_{i}\}\colon (\V,\phi) \longrightarrow 
(\W,\psi)$.
 The commutativity of the diagram \eqref{eq-comm-dia-f_i}, together with Lemma
 \ref{rem-MQ-NQ-A}, imply that $G(\{f_{i}\})$ is $\Ml Q$-linear.
To conclude the proof,  one checks that the functors $F$ and $G$ are quasi-inverse to each other. \end{proof}

\begin{rem} The first statement in 
Theorem  \ref{ThmRQmod} is a slight generalization of 
Proposition 5.1 in  \cite{ACGP},  where the $\Ol_{X}$-modules    $\Ml_{\alpha}$ are supposed to  be locally free   of finite rank.   \end{rem}
 
 \begin{rem} In view of Remark \ref{TPrem}, Theorem \ref{ThmRQmod} can be stated in the following alternative ways, without
 assuming that the $\Ol_X$-modules $\Ml_\alpha$ are locally projective:
 \begin{itemize} \item Let $\modd{\Ml Q}_{\rm lp}$ and $\modd{\Al}_{\rm lp}$ be the full subcategories
 of $\modd{\Ml Q}$ and $\modd{\Al}$, respectively, whose objects are locally projective $\Ol_X$-modules. Analogously, we denote by
  $\Rep{\Ml Q}_{\rm lp}$   the full subcategory of $\Rep{\Ml Q}$ made by objects  $\big(\{\V_{i}\},\{\phi_{\alpha}\}\big)$
where the $\Ol_X$-modules $\V_i$  are locally projective. Then, the categories $\Rep{\Ml Q}_{\rm lp} $ and $\modd{\Ml Q}_{\rm lp}$
are equivalent.   In particular, $\modd{\Al}_{\rm lp}$ is a full subcategory of $\Rep{\Ml Q}_{\rm lp} $.
\item Let $\modd{\Ml Q}_{\rm qc}$ and $\modd{\Al}_{\rm qc}$ be the full subcategories
 of $\modd{\Ml Q}$ and $\modd{\Al}$, respectively, whose objects are quasi-coherent $\Ol_X$-modules. Analogously, we denote by
  $\Rep{\Ml Q}_{\rm qc}$   the full subcategory of $\Rep{\Ml Q}$ made by objects  $\big(\{\V_{i}\},\{\phi_{\alpha}\}\big)$
where the $\Ol_X$-modules $\V_i$  are quasi-coherent. Assume that $X$ is a scheme, and
that the $\Ol_X$-modules $\Ml_\alpha$ are quasi-coherent.
Then, the categories $\Rep{\Ml Q}_{\rm qc} $ and $\modd{\Ml Q}_{\rm qc}$
are equivalent.   In particular, $\modd{\Al}_{\rm qc}$ is a full subcategory of $\Rep{\Ml Q}_{\rm qc} $.
 \end{itemize}
 \end{rem}

\section{The untwisted case}\label{untwisted}
We now study in more detail the case when $$\Ml_{\alpha}=\Ol_{X}\alpha,$$ so  that 
$\Ml=\bigoplus_{\alpha\in E}\Ol_{X}\alpha$ and $\Ml Q\simeq \Ol_{X}Q$. Note that condition (TP) is trivially verified, and also
\begin{equation}
\Ml_{\alpha}\otimes_{\Ol_{X}}\V_{t(\alpha)}\simeq\V_{t(\alpha)}
\end{equation}
for all $\alpha\in E$. This fact simplifies matters. In particular, for each $\alpha\in E$, the morphism $\rho_{\V,\alpha}$ defined in \eqref{eq-def-rho_alpha} simply becomes multiplication by $\alpha$.
Moreover, by Theorem \ref{ThmRQmod} there is an equivalence
$G\colon \Rep{\Ol_X Q} \to \Ol_{X}Q\mbox{-\bf mod}$.
The subcategory of $\Rep{ \Ol_{X}Q}$ equivalent to the subcategory $\modd{\Al}$ of  $\Ol_{X}Q\mbox{-\bf mod}$ can be described in an explicit way, as we now show.
 Let $(\U,\phi)$ be an object in $\Rep{ \Ol_{X}Q}$.  For any path $p$ in the quiver $Q$, we set
\begin{equation}
\phi_{p}=
\begin{cases}
\id_{\U_{i}} &\qquad\text{if}\quad p=e_{i}\\
\phi_{\alpha_{1}}\circ\phi_{\alpha_{2}}\circ\cdots\circ\phi_{\alpha_{n}} &\qquad \text{if}\quad p=\alpha_{1}\alpha_{2}\cdots\alpha_{n}\,.
\end{cases}
\end{equation}
So, if the path $p$ starts at the vertex $i$ and ends at the vertex $j$, one has 
\begin{equation}
\phi_{p}\in\Hom_{\Ol_{X}}(\U_{i},\U_{j})\,.
\end{equation} 
Let $U$ be an open subset  of $X$. 
A  section $\eta \in \Ol_{X}Q)(U)$ can be written as a finite sum
\begin{equation}
\eta=\sum_{p\ \text{path in $Q$}} r_{p}\,p
\end{equation}
with  $r_{p}\in \Ol_{X}(U)$. Thus we set 
\begin{equation}
\eta(\phi)=\sum_{p\ \text{path in $Q$}}r_{p}(\phi_{p}|_{U})\in\bigoplus_{i,j\in I}\Hom_{\Ol_{U}}(\U_{i}|_{U},\U_{j}|_{U})=
\End_{\Ol_{U}}\big((\mbox{$\bigoplus_{i\in I}$}\U_{i})|_{U}\big)
\label{eq-def-eta(phi)}
\end{equation}

\begin{defin}
The category $\Rep{\Al}$ is the  full subcategory of $\Rep{\Ol_X Q} $ whose objects $(\U,\phi)$ satisfy the  condition
\begin{equation}
\eta (\phi)=0\qquad\text{for all sections $\eta$ of $\Kl$}\,.
\label{eq-eta(phi)=0}
\end{equation}
\end{defin}
The main result of this Section is the following theorem.
\begin{thm}
Assume that  $\Ml_{\alpha}=\Ol_{X}\alpha$. There is an equivalence $G_0\colon  \Rep{\Al}\longrightarrow \modd{\Al}$ that fits into the commutative diagram of functors 
\begin{equation}
\xymatrix@C+1em{
\Rep{\Al} \ar[r]^-{G_{0}} \ar@{^{(}->}[d]^{J} & \modd{\Al} \ar@{^{(}->}[d]^{R}\\
\Rep{\Ml Q} \ar[r]^-{G} & \Ol_{X}Q\mbox{-\bf mod}
}
\label{eq-dia-cat-R-rep-mod}
\end{equation}
where the functor $G$ is the equivalence introduced in eq.~\eqref{eq-funct-G}, $J$ is the inclusion of $\Rep{\Al}$ into $\Rep{\Ol_X Q}$, and the functor $R$ is the embedding induced by restriction of scalars.
\end{thm}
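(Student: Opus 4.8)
The plan is to construct the equivalence $G_0$ by restricting the equivalence $G$ of Theorem \ref{ThmRQmod} and then checking that it lands in $\modd{\Al}$ and is essentially surjective onto it. First I would recall that $G$ sends an object $(\U,\phi)$ of $\Rep{\Ol_X Q}$ to the $\Ol_X Q$-module $\U' = \bigoplus_{i\in I}\U_i$, on which a section $\eta$ of $\Ol_X Q$ acts precisely as the endomorphism $\eta(\phi)$ described in \eqref{eq-def-eta(phi)}; this is just unwinding the definition of the algebra morphism $\rho_\phi$ of \eqref{eq-rho_phi} together with the identification $\Ml_\alpha \otimes_{\Ol_X}\V_{t(\alpha)}\simeq \V_{t(\alpha)}$. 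Hence the defining condition \eqref{eq-eta(phi)=0} for $(\U,\phi)\in \Rep{\Al}$ says exactly that $\eta$ acts as zero on $\U'$ for every section $\eta$ of $\Kl$, i.e. that the $\Ol_X Q$-module $\U'$ is annihilated by $\Kl$ and therefore descends to a module over $\Al = \Ml Q/\Kl$. Conversely, any left $\Al$-module $\V$, viewed via restriction of scalars $R$ as an $\Ol_X Q$-module on which $\Kl$ acts trivially, is in the essential image of $G$; writing $\V = G(\U,\phi)$ for some $(\U,\phi)\in\Rep{\Ol_X Q}$, the triviality of the $\Kl$-action forces $\eta(\phi)=0$ for all sections $\eta$ of $\Kl$, so $(\U,\phi)$ lies in $\Rep{\Al}$.

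Once this is in place I would define $G_0(\U,\phi)$ to be the module $\U'$ with its $\Al$-module structure obtained by descent, and $G_0(\{f_i\}) = \oplus_{i\in I} f_i$; since a morphism in $\Rep{\Al}$ is by definition already a morphism in $\Rep{\Ol_X Q}$, Theorem \ref{ThmRQmod} (via Corollary \ref{rem-MQ-NQ-A}) guarantees that $\oplus_i f_i$ is $\Ol_X Q$-linear, hence $\Al$-linear between the descended modules. Functoriality of $G_0$ is inherited from that of $G$. Commutativity of the square \eqref{eq-dia-cat-R-rep-mod} is then a tautology: $R\circ G_0(\U,\phi)$ is the module $\U'$ regarded as an $\Ol_X Q$-module by restriction of scalars, which is exactly $G\circ J(\U,\phi)$, and the same identification holds on morphisms. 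To see that $G_0$ is an equivalence, I would invoke the standard criterion: it is fully faithful because $G$ is fully faithful and $R$, $J$ are inclusions of full subcategories compatible via the square, so for $(\U,\phi),(\U',\phi')\in\Rep{\Al}$ the map on Hom-sets is the same as that of $G$ between the corresponding objects; and it is essentially surjective by the converse direction established in the first paragraph, together with the fact that for $\V\in\modd{\Al}$ one has $G_0(G^{-1}(R\V)) \cong \V$ as $\Al$-modules (both are the same underlying $\Ol_X Q$-module with $\Kl$ acting as zero, and an $\Al$-module structure is uniquely determined by the $\Ol_X Q$-module structure it induces).

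The main obstacle, and the step deserving the most care, is the precise identification between the action of a section $\eta$ of $\Ol_X Q$ on $G(\U,\phi)$ and the endomorphism $\eta(\phi)$ of \eqref{eq-def-eta(phi)} — in particular, matching the order of composition of the $\phi_\alpha$ with the multiplication convention in the path algebra and checking this is compatible with sheafification over the open sets $U$ on which things are defined. Closely related is the subtlety that being ``a module over the quotient algebra $\Al$'' must be phrased correctly: an $\Ol_X Q$-module structure on $\bigoplus_i \U_i$ factors through $\pi\colon \Ml Q\to\Al$ if and only if the two-sided ideal sheaf $\Kl$ acts as zero, and one should check this is equivalent to $\eta(\phi)=0$ merely for sections $\eta$ of $\Kl$ (not for arbitrary elements of the ideal generated by such sections), which holds because $\Kl$ is already a sheaf of two-sided ideals and the action is $\Ol_X Q$-bilinear. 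Everything else — functoriality, commutativity of the diagram, full faithfulness — follows formally from Theorem \ref{ThmRQmod} and the definitions, and I would treat it briskly.
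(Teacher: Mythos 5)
Your proposal is correct and follows essentially the same route as the paper: both hinge on the identification $\rho_{\phi}(\eta)=\eta(\phi)$, so that condition \eqref{eq-eta(phi)=0} is exactly the statement that $\Kl$ annihilates $\U'=\bigoplus_{i}\U_{i}$ and the $\Ol_{X}Q$-module structure descends through $\pi$ to an $\Al$-module structure, with everything else ($\Al$-linearity of $\oplus_{i}f_{i}$, commutativity of the square) formal from Theorem \ref{ThmRQmod} and the fullness of $R$. The only cosmetic difference is that the paper exhibits an explicit quasi-inverse $F_{0}(\V)=\big(\{\V_{i}\},\{\rho_{\V,\alpha}\}\big)$, whereas you verify full faithfulness and essential surjectivity abstractly from the equivalence $G$; the two arguments are interchangeable.
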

\begin{proof}
Since $\Al = \Ol_X Q/\K$,  the functor  $R\colon \modd{\Al} \longrightarrow \Ol_{X}Q\mbox{-\bf mod}$ is a full embedding. 
For each object $(\U,\phi)$ in $\Rep{\Al}$ let
$\rho_{\phi}$
be the homomorphism  introduced in eq.~\eqref{eq-rho_phi}. It is easy to check that, for each section $\eta$ of $\Ol_{X}Q$, one has
$
\rho_{\phi}(\eta)=\eta(\phi)
$
(see eq.~\eqref{eq-def-eta(phi)}). Since $\phi$ fulfills condition \eqref{eq-eta(phi)=0}, one has $\rho_{\phi}\vert_\Kl=0$ and there is an induced homomorphism $\tilde{\rho}_{\phi}\colon\Al\longrightarrow \lEnd_{\Ol_{X}}(\U')$, with 
$\U' =  \bigoplus_{i\in I}\U_{i}$, which fits into the   commutative diagram
\begin{equation}
\xymatrix{
 \Ol_{X} Q \ar[r]^-{\pi} \ar[rd]_(.45){\rho_{\phi}} & \Al \ar[d]^{\tilde{\rho}_{\phi}}\\
 & \lEnd_{\Ol_{X}}(\U')
}\,.
\label{eq-pi-rho-tilde(rho)}
\end{equation}
By giving $\U'$ a structure of left $\Al$-module via the morphism $\tilde\rho_\phi$,
in analogy  with  the functor $G$ (cf.~eq.~\eqref{eq-funct-G}),  one defines
a functor 
 $G_0\colon  \Rep{\Al}\longrightarrow \modd{\Al}$, whose action on morphisms is 
$G_{0}(\{f_{i}\})  =\oplus_{i\in I}f_{i}$ for each  morphism $\{f_{i}\}\colon(\U,\phi)\longrightarrow(\W,\psi)$. 
The functor $G_{0}$ is well defined because 
the morphism $\oplus_{i\in I}f_{i}$ is $\Al$-linear. To prove this claim one can argue as follows: first, 
the commutativity of \eqref{eq-pi-rho-tilde(rho)} implies that  the $\Ol_{X}Q$-module structure induced on $\U'$ (resp. on $\mbox{$\bigoplus_{i\in I}$}\W_{i}$) by restriction of scalars is precisely the same as that provided by $\rho_{\phi}$ (resp. $\rho_{\psi}$); second,  Theorem \ref{ThmRQmod} shows that $\oplus_{i\in I}f_{i}$ is $\Ol_{X} Q$-linear; third,  $R$ is a full embedding, so that 
$\oplus_{i\in I}f_{i}$ is $\Al$-linear.

To prove that $G_0$ is an equivalence we show that it has a quasi-inverse. 
For each $\Al$-module $\V$ let 
\begin{equation}
F_{0}(\V)=\big(\{\V_{i}\}_{i\in I},\{\rho_{\V,\alpha}\}_{\alpha\in E}\big)
\end{equation}
(see eqs.~\eqref{Vi} and \eqref{eq-def-rho_alpha}). 
Now, $F_{0}(\V)$ is an object of $\Rep{\Ol_X Q}$; but for each section 
 $\eta$ of $\Ol_{X}Q$, the morphism $\eta(\{\rho_{\V,\alpha}\})$ (same notation as in eq.~\eqref{eq-def-eta(phi)}) is just the multiplication by $\eta$. Since $\Kl$ is inside the annihilator of $\V$,  eq.~\eqref{eq-eta(phi)=0} is satisfied and $F_{0}(\V)$ is actually an object of $\Rep{\Al}$. For each morphism $f\colon\V\longrightarrow\W$ of left $\Al$-modules let
\begin{equation}
F_{0}(f)=\{f|_{\V_{i}}\}_{i\in I}\,,
\end{equation}
which is a morphism in $\Rep{\Al}$ by Corollary \ref{rem-MQ-NQ-A}. We have therefore defined  
a functor $F_{0}\colon\modd{\Al}\longrightarrow \Rep{\Al}$, and it is not hard to check
that $G_{0}$ and $F_{0}$ are quasi-inverse to each other. 

Finally, the commutativity of \eqref{eq-dia-cat-R-rep-mod} follows at once from the commutativity of \eqref{eq-pi-rho-tilde(rho)}.
\end{proof}

\bigskip \section{A spectral sequence} \label{ss}
In this  and   the next section
the sheaves $\Ml_\alpha$ are  assumed to 
 be 
locally free $\Ol_X$-modules   (for instance,
when they are flat finitely presented $\Ol_X$-modules, and $(X,\Ol_X)$ 
is a locally ringed space, they are indeed locally free of finite rank; this is proved  using \cite[Thm.~7.10]{MatRings} and \cite[Chap.~0, 5.2.7]{EGA-I}).

\begin{thm}
\label{thm-spect-seq}
Let $\V$ and $\W$ be two left $\Al$-modules, and suppose the following  conditions are fulfilled:
\begin{itemize} \setlength\itemsep{.3em}
\item[(K1)]  The $\Ol_{X}$-modules 
$\Al^{p}=\K^p/\K^{p+1}$ are a locally free  for all $p\geq 0$;
\item[(K2)]
$\Kl$ is flat as a right $\Ml Q$-module.
\end{itemize}
Then:
\begin{itemize}
\item[a)] 
there exists a convergent first quadrant spectral sequence 
\begin{equation} \El^{p,q}_{\bullet}(\V,\W)\quad\Rightarrow\quad\lExt_{\Al}^{p+q}(\V,\W)\,,
\end{equation}
whose first page is
\begin{equation}
 \El^{p,q}_{1}(\V,\W)=
\begin{cases}
\bigoplus_{i\in I}\lExt^{q}_{\Ol_{X}}\big((\Al^{p/2}\otimes_{\Al}\V)_{i},\W_{i}\big) &\qquad\text{if $p$ is even}\\[3pt]
\bigoplus_{\alpha\in E}\lExt^{q}_{\Ol_{X}}\big(\Ml_{\alpha}\otimes_{\Ol_{X}}(\Al^{(p-1)/2}\otimes_{\Al}\V)_{t(\alpha)},\W_{h(\alpha)}\big) &\qquad\text{if $p$ is odd}
\end{cases}
\label{eq-loc-E_1^(p,q)}
\end{equation}
\item[b)]
there exists a convergent first quadrant spectral sequence 
\begin{equation}E^{p,q}_{\bullet}(\V,\W)\quad\Rightarrow\quad\Ext_{\Al}^{p+q}(\V,\W)\,,
\end{equation}
whose first page is
\begin{equation}
E^{p,q}_{1}(\V,\W)=
\begin{cases}
\bigoplus_{i\in I}\Ext^{q}_{\Ol_{X}}\big((\Al^{p/2}\otimes_{\Al}\V)_{i},\W_{i}\big) &\qquad\text{if $p$ is even}\\[3pt]
\bigoplus_{\alpha\in E}\Ext^{q}_{\Ol_{X}}\big(\Ml_{\alpha}\otimes_{\Ol_{X}}(\Al^{(p-1)/2}\otimes_{\Al}\V)_{t(\alpha)},\W_{h(\alpha)}\big) &\qquad\text{if $p$ is odd}
\end{cases}
\label{eq-gl-E_1^(p,q)}
\end{equation}
\end{itemize}
\end{thm}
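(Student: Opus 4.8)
The plan is to imitate the strategy of Gothen--King but, instead of obtaining a short exact sequence of functors, to build a (possibly infinite) resolution of $\V$ by ``induced'' $\Al$-modules and extract the spectral sequence of the associated double complex. Concretely, I would first construct, following \eqref{eq-C(V)} (the resolution alluded to in the introduction), a canonical resolution of any left $\Al$-module $\V$ of the form
\begin{equation}
\cdots \to \Al\otimes_{\B}\Ml\otimes_{\Al}(\K\otimes_{\Al}\V) \to \Al\otimes_{\B}(\K\otimes_{\Al}\V) \to \Al\otimes_{\B}\Ml\otimes_{\Al}\V \to \Al\otimes_{\B}\V \to \V \to 0,
\end{equation}
i.e.\ splicing together the ``standard'' two-term resolution $\Al\otimes_{\B}\Ml\otimes_{\B}\V \to \Al\otimes_{\B}\V \to \V$ that exists for quiver algebras without relations with copies of itself twisted by the powers $\Al^{p}=\K^{p}/\K^{p+1}$; the flatness hypothesis (K2) is what guarantees that applying $-\otimes_{\Al}\V$ preserves exactness so that this is genuinely a resolution, and (K1) guarantees that each term is, locally, a sum of ``free at a vertex'' $\Al$-modules whose $\Ext_{\Al}$ against $\W$ reduces to an $\Ext_{\Ol_X}$ computation. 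This is the step I expect to be the main obstacle: making precise the splicing, checking exactness at each spot using (K2), and identifying the $p$-th term with $\Al\otimes_{\B}(\text{something}\otimes_{\Ol_X}\Al^{\lfloor p/2\rfloor}\otimes_{\Al}\V)$ where ``something'' is $\Ml$ in odd degrees and the unit in even degrees — essentially Lemma~\ref{lm-elem-ex-seq-V} applied iteratively.

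Next I would compute $\lExt_{\Al}(\text{induced module},\W)$. The key adjunction is that for an $\Ol_X$-module $\Ss$ with its $\B$-module structure, $\lHom_{\Al}(\Al\otimes_{\B}\Ss,\W)\cong\lHom_{\B}(\Ss,\W)\cong\bigoplus_{i\in I}\lHom_{\Ol_X}(\Ss_i,\W_i)$ — this is exactly the content of Lemma~\ref{lm-Kergamma} / Corollary~\ref{rem-MQ-NQ-A} in the guise that makes $\gamma_{\V,\W}$ appear as the differential. Since $\Al\otimes_{\B}-$ sends locally free (hence flat, hence $\Ol_X$-acyclic in the relevant sense) $\Ol_X$-modules to $\Al$-modules that are acyclic for $\lHom_{\Al}(-,\W)$ in the appropriate sense — here one uses that $\Ml_\alpha$ and $\Al^{p}$ are locally free, hypotheses (K1) and the standing assumption of \S\ref{ss} — the derived functor $\lExt_{\Al}^{\bullet}(\V,\W)$ is computed by a double complex whose columns are injective resolutions of $\W_i$ (resp.\ $\W_{h(\alpha)}$) in $\Ol_X$-{\bf mod} and whose rows are the Hom-complex of the resolution above. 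Taking cohomology first in the vertical direction gives precisely the $\lExt_{\Ol_X}^q$ terms of \eqref{eq-loc-E_1^(p,q)}, with $\Al^{p/2}\otimes_{\Al}\V$ in even columns and $\Ml_\alpha\otimes_{\Ol_X}(\Al^{(p-1)/2}\otimes_{\Al}\V)_{t(\alpha)}$ in odd columns, after using condition (TP) (valid since $\Ml_\alpha$ is locally free) to commute the tensor product past sections, exactly as in the proof of Lemma~\ref{lm-Kergamma}.

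Then part a) is the standard first-quadrant double-complex spectral sequence: it converges to the total cohomology, which by the previous paragraph is $\lExt_{\Al}^{p+q}(\V,\W)$, and its $\El_1$ page is the vertical cohomology, which is \eqref{eq-loc-E_1^(p,q)}. Convergence is automatic because the double complex lives in the first quadrant (each anti-diagonal is finite), even though the complex is unbounded in the horizontal direction; this is the remark already made in the introduction, and it is why no degeneration is expected. For part b) I would repeat the argument verbatim after applying the global sections functor, or more cleanly note that $\Ext_{\Al}^{n}(\V,\W)=\mathbf R^{n}\Gamma\circ\mathbf R\lHom_{\Al}(\V,\W)$ and run the same double-complex argument with $\Hom$ in place of $\lHom$; the only point to check is that the terms $\Al\otimes_{\B}(\cdots)$ in the resolution are still suitable for computing the global $\Ext$, which again follows from local freeness together with the fact that $\Al\otimes_\B -$ is exact on the locally free modules appearing. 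One small technical remark worth inserting: to make ``compute cohomology vertically first'' legitimate one wants the rows or columns of the double complex to be bounded below, which holds here since injective resolutions start in degree $0$; no Mittag-Leffler or $\lim^1$ subtlety arises precisely because of the first-quadrant location. I would end by noting that the whole construction is natural in $\V$ and $\W$, which is what the notation $\El_\bullet^{p,q}(\V,\W)$ is meant to record.
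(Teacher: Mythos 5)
Your route is genuinely different from the paper's: it is essentially the dual construction. The paper never resolves $\V$; instead it builds, for each left $\Al$-module, the infinite coresolution \eqref{eq-C(V)} by \emph{coinduced} modules $\bigoplus_{i}\lHom_{\Ol_{X}}(\Al^{p/2}_{i},-)$ and $\bigoplus_{\alpha}\lHom_{\Ol_{X}}(\Ml_{\alpha}\otimes_{\Ol_X}\Al^{(p-1)/2}_{t(\alpha)},-)$, obtained by splicing the four-term sequences of Proposition \ref{lm-elem-ex-seq-V} (whose proof rests on the surjectivity of $\gamma_{\Kl^{p},\V}$ and on Lemma \ref{lm-iso-pi_(p*)}); it applies this to each term of an $\Ol_X$-injective resolution $\W^{\bullet}$ of $\W$, shows that the entries of the resulting double complex are injective $\Al$-modules and that its total complex is an injective $\Al$-resolution of $\W$ (Lemmas \ref{lm-C(V)-inj} and \ref{lm-T*-inj-res}, where (K2) is used), and only then applies $\lHom_{\Al}(\V,-)$. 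You instead resolve $\V$ by \emph{induced} modules $\Al\otimes_{\B}(\Al^{k}\otimes_{\Al}\V)$ and $\Al\otimes_{\B}\Ml\otimes_{\B}(\Al^{k}\otimes_{\Al}\V)$ and apply $\lHom_{\Al}(-,\W)$. This is closer to Gothen--King's original argument, and if carried through it makes the identification of the abutment cheaper (the hyper-Ext spectral sequence of a resolution of the first argument converges for free); the price is that all the real work migrates into the construction of the resolution.

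Two steps, however, are not yet proofs. First, the existence and exactness of your induced resolution is precisely the dual of Proposition \ref{lm-elem-ex-seq-V} together with the splicing, and it is where (K1)--(K2) must actually be spent (Tor-vanishing when the sequences \eqref{eq-ex-seq-p} are tensored against $\V$, and the identification of the successive syzygies with $\Al^{k}\otimes_{\Al}\V$, the analogue of Lemma \ref{lm-iso-pi_(p*)}); you flag this as the main obstacle but give no argument, and as written some of your tensor products ($\Ml\otimes_{\Al}\V$, $\Kl\otimes_{\Al}\V$) are taken over the wrong ring. Second, the identification of the abutment is not justified as stated: you assert that the induced modules are ``acyclic for $\lHom_{\Al}(-,\W)$,'' which cannot be right, since $\lExt^{q}_{\Al}(\Al\otimes_{\B}\Ss,\W)\simeq\bigoplus_{i}\lExt^{q}_{\Ol_{X}}(\Ss_{i},\W_{i})$ is exactly the (generally nonzero) $E_{1}$-term you are trying to produce. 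With $\W^{\bullet}$ only injective over $\Ol_{X}$, the row-wise spectral sequence of your double complex $\lHom_{\Al}(P_{\bullet},\W^{\bullet})$ has no reason to degenerate, because $\lHom_{\Al}(-,\W^{q})$ is not exact. The repair is either to resolve $\W$ injectively in $\modd{\Al}$ and invoke the Eckmann--Shapiro change of rings along $\B\to\Al$ (legitimate because $\Al=\Al^{0}$ is locally free over $\Ol_{X}$ by (K1), so restriction of scalars preserves injectives), or to prove directly that the rows $\lHom_{\Al}(P_{\bullet},\W^{q})$ are exact in positive degrees --- which is the paper's Lemma \ref{lm-C(V)-inj} in disguise. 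One of these must be supplied.
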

To prove Theorem \ref{thm-spect-seq} we need to establish some preliminary results. 

First, we fix a convention to distinguish the two different structures of left $\Al$-module on the sheaf $\Hl=\lHom_{\Al}(\F,\G)$ when
$\F$ is an $\Al$-bimodule and $\G$ is a left $\Al$-module. Let $U'\subseteq U\subseteq X$ be open sets, and let $f\in\Hl(U)$, $a\in\Al(U)$ and $s\in\F(U')$. We denote by 
$af$ the left multiplication (our usual one) given by 
\begin{equation}
(af)(s)=f(sa)
\label{eq-(L1)}
\end{equation}
and by $a \! \ast\! f$ the left multiplication given by 
\begin{equation}
(a\! \ast\! f)(s)=a\bigl (f(s)\bigr)=f(as)\,.
\label{eq-(L2)}
\end{equation}
We adopt an analogous convention for $\Ml Q$-modules.

Next, we notice that for each $p\geq 0$ there is  a  short exact sequence of $\Ml Q$-bimodules: 
\begin{equation}
\xymatrix{
0 \ar[r] & \Kl^{p+1} \ar[r]^-{\iota_{p}} & \Kl^{p} \ar[r]^-{\pi_{p}} & \Al^{p} \ar[r] & 0
}\,,
\label{eq-ex-seq-p}
\end{equation}
where $\iota_{p}$ is the canonical injection and $\pi_{p}$ is the canonical projection (of course, $\pi_{0}=\pi$). Since both $\Kl^{p}$ and $\Al^{p}$ can be decomposed as in eq.~\eqref{eq-V=sum_iV_i}, we set
\begin{equation}
\iota_{p,i}=\iota_{p}|_{\Kl^{p}_{i}}\,,\qquad\pi_{p,i}=\pi_{p}|_{\Al^{p}_{i}}\,.
\end{equation}
Each  left $\Al$-module $\V$ has a natural   left $\Ml Q$-module structure induced by the restriction of scalars. So we can apply the functor $\lHom_{\Ml Q}(-,\V)$ to eq.~\eqref{eq-ex-seq-p},    getting an injection
\begin{equation}
\xymatrix{
0 \ar[r] & \lHom_{\Ml Q}(\Al^{p},\V) \ar[r]^-{\pi_{p*}} & \lHom_{\Ml Q}(\Kl^{p},\V)
}
\label{eq-pi_(p*)}
\end{equation}
of left $\Ml Q$- and $\ast$-left $\Al$-modules (cf.~eq.~\eqref{eq-(L2)}).
\begin{lemma}
\label{lm-iso-pi_(p*)}
For any left $\Al$-module $\V$, the injection \eqref{eq-pi_(p*)} is an isomorphism of left $\Ml Q$- and $\ast$-left $\Al$-modules.
\end{lemma}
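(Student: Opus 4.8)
The plan is to apply the left exact functor $\lHom_{\Ml Q}(-,\V)$ to the short exact sequence \eqref{eq-ex-seq-p} of $\Ml Q$-bimodules. This yields an exact sequence
\[
0 \longrightarrow \lHom_{\Ml Q}(\Al^{p},\V) \xrightarrow{\pi_{p*}} \lHom_{\Ml Q}(\Kl^{p},\V) \xrightarrow{\iota_{p*}} \lHom_{\Ml Q}(\Kl^{p+1},\V)
\]
of left $\Ml Q$- and $\ast$-left $\Al$-modules, where $\iota_{p*}$ denotes precomposition with $\iota_{p}$. We already know that $\pi_{p*}$ is injective, and by exactness its image equals $\ker\iota_{p*}$; hence it suffices to show that $\iota_{p*}=0$, that is, that every $\Ml Q$-linear morphism $f\colon\Kl^{p}\to\V$ restricts to zero on the subsheaf $\Kl^{p+1}$.

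This is precisely where the assumption that $\V$ is a left $\Al$-module (rather than merely a left $\Ml Q$-module) comes in: the $\Ml Q$-module structure on $\V$ factors through $\pi\colon\Ml Q\to\Al$, so every section of $\Kl$ annihilates $\V$. Since $\iota_{p*}(f)=f|_{\Kl^{p+1}}$ is a morphism of sheaves, it is enough to check its vanishing locally, so I would argue on stalks or over a small enough open set: there a section (resp.\ germ) of $\Kl^{p+1}=\Kl\cdot\Kl^{p}$ is a finite sum $\sum_{j}a_{j}b_{j}$ with the $a_{j}$ sections (resp.\ germs) of $\Kl$ and the $b_{j}$ of $\Kl^{p}$. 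Then left $\Ml Q$-linearity of $f$ gives $f\bigl(\sum_{j}a_{j}b_{j}\bigr)=\sum_{j}a_{j}\,f(b_{j})$, and each summand $a_{j}\,f(b_{j})$ vanishes because $a_{j}$ is a section of $\Kl$ and $f(b_{j})$ is a section of $\V$. Therefore $f|_{\Kl^{p+1}}=0$, so $\iota_{p*}=0$ and $\pi_{p*}$ is surjective; being injective and already a morphism of left $\Ml Q$- and $\ast$-left $\Al$-modules, it is the desired isomorphism.

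I do not expect a genuine obstacle here. The only delicate points are bookkeeping ones: one must localize before expressing a section of $\Kl^{p+1}$ as a sum of products $a_{j}b_{j}$ with $a_{j}\in\Kl$ and $b_{j}\in\Kl^{p}$ (the presheaf product and its sheafification agreeing on small enough opens), and one must observe that the two identities used at the section/germ level, $f(ab)=a\,f(b)$ and $a\cdot(\text{section of }\V)=0$ for $a$ a section of $\Kl$, are immediate from the definition of $\Ml Q$-linearity and of the restriction-of-scalars structure, respectively. No additional argument is needed for compatibility with the module structures, since $\pi_{p*}$ is a morphism in the relevant category by construction.
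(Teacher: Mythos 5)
Your proposal is correct and follows essentially the same route as the paper: the paper likewise reduces to showing that any $\Ml Q$-linear $f\colon\Kl^{p}\to\V$ kills $\Kl^{p+1}$, by writing local sections of $\Kl^{p+1}$ as sums of products of sections of $\Kl$ and using $\pi(s_{1})f(\cdots)=0$, and then concludes that $f$ factors through $\Al^{p}$ (your reformulation via left-exactness of $\lHom_{\Ml Q}(-,\V)$ and the vanishing of $\iota_{p*}$ is just a repackaging of that factorization).
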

\begin{proof}
We show that for each point $x\in X$ the morphism $\pi_{p*,x}$ is surjective. 
Let $U$ be an open neighbourhood of $x$ and $f\in\lHom_{\Ml Q}(\Kl^{p},\V)(U)$. We observe that
\begin{equation}
f(\Kl^{p+1}|_{U})=0\,.
\label{eq-f(K_x)}
\end{equation}
In fact, for any open subset $U' \subset U$,  every section of $\Kl^{p+1}(U')$ can be written as a finite sum  of products of the form $s=s_{1}s_{2}\cdots s_{p+1}$, with $s_{i}\in\Kl(U')$ for $i=1,\dots,p+1$. But one has 
\begin{equation}
f(s)=s_{1}f(s_{2}\cdots s_{p+1})=\pi (s_{1})f(s_{2}\cdots s_{p+1})=0\,.
\end{equation} 
Thus $f$ induces a morphism $\bar{f}\colon\Al^{p}|_{U}\longrightarrow\V|_{U}$
  such that
\begin{equation}
\pi_{p*}(\bar{f})=f\,.
\end{equation}
This proves that $\pi_{p*}|_{U}$ is surjective, and so is $\pi_{p*,x}$.
\end{proof}
\begin{prop}
\label{lm-elem-ex-seq-V}
Suppose that condition {\rm (K1)} is satisfied.
\begin{itemize}
\item
For all left $\Ml Q$-modules $\V$ and for all $p\geq 0$ the morphism $\gamma_{\Kl^{p},\V}$ is surjective. \\[-15pt]
\item
Every left $\Al$-module $\V$ fits into the following exact sequences of left $\Al$-modules:
\begin{multline}
\xymatrix{
0 \ar[r] & \lHom_{\Al}(\Al^{p},\V) \ar[r]
 & \bigoplus_{i\in I}\lHom_{\Ol_{X}}(\Al^{p}_{i},\V_{i}) \ar[r] &
}\\
\xymatrix{
\ar[rr]^-{\gamma_{\Al^{p},\V}} && \bigoplus_{\alpha\in E}\lHom_{\Ol_{X}}(\Ml_{\alpha}\otimes_{\Ol_{X}}\Al^{p}_{t(\alpha)},\V_{h(\alpha)}) \ar[r]
 & \lHom_{\Al}(\Al^{p+1},\V) \ar[r] & 0
} 
\label{eq-elem-ex-seq-V}
\end{multline}
(this makes sense by Lemma \ref{lm-gamma-A-lin}).
\end{itemize}
\end{prop}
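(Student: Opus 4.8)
The plan is to deduce both bullets of Proposition \ref{lm-elem-ex-seq-V} from the short exact sequence \eqref{eq-ex-seq-p} of $\Ml Q$-bimodules
\[
0 \to \Kl^{p+1} \xrightarrow{\iota_p} \Kl^p \xrightarrow{\pi_p} \Al^p \to 0
\]
by combining it with Lemma \ref{lm-Kergamma}, Lemma \ref{lm-iso-pi_(p*)}, and the surjectivity part of the first bullet. So I would prove the first bullet first. By condition (K1), $\Al^p = \Kl^p/\Kl^{p+1}$ is locally free, hence locally projective; since the $\Ml_\alpha$ are locally free as well, the natural map $\lHom_{\Ol_X}(\Ml_\alpha \otimes_{\Ol_X} \Kl^p_{t(\alpha)}, \V_{h(\alpha)}) \to \lHom_{\Ol_X}(\Ml_\alpha \otimes_{\Ol_X} \Al^p_{t(\alpha)}, \V_{h(\alpha)}) \oplus \lHom_{\Ol_X}(\Ml_\alpha \otimes_{\Ol_X} \Kl^{p+1}_{t(\alpha)}, \V_{h(\alpha)})$ coming from the (locally) split exact sequence $0 \to \Kl^{p+1}_i \to \Kl^p_i \to \Al^p_i \to 0$ is an isomorphism. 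The point is that locally \eqref{eq-ex-seq-p} splits, so each $\rho_{\Kl^p,\alpha}$ fits into a local splitting and $\gamma_{\Kl^p,\V}$ becomes, after tensoring with the locally free modules $\Ml_\alpha$, a direct sum of the maps $\gamma_{\Kl^{p+1},\V}$ and $\gamma_{\Al^p,\V}$ up to the twist; more directly, I would argue that surjectivity of $\gamma_{\Kl^p,\V}$ can be checked on stalks, and that over a small enough open set $U$ where (TP) holds and \eqref{eq-ex-seq-p} splits as a sequence of $\Ol_X(U)$-modules one can write down an explicit preimage. Given a local section $\{g_\alpha\}_{\alpha\in E}$ with $g_\alpha \colon \Ml_\alpha \otimes \Kl^p_{t(\alpha)} \to \V_{h(\alpha)}$, one builds $\{f_i\}_{i\in I}$ with $f_i \colon \Kl^p_i \to \V_i$ by choosing, for each arrow $\alpha$ entering $i$, the value of $f_i$ on the image of $\rho_{\Kl^p,\alpha}$ using $g_\alpha$, and extending by zero on a complement; since $\Kl^p$ is generated in degrees $\ge p$ by products $s_1\cdots s_k m$ with $s_j \in \Kl$, $m \in \Ml$, this determines $f_i$ consistently and gives $\gamma_{\Kl^p,\V}(\{f_i\}) = \{g_\alpha\}$. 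The main obstacle is precisely this bookkeeping: making the local section $\{f_i\}$ well-defined and independent of the chosen decomposition of sections of $\Kl^p_i$ into the contributions from the various incoming arrows; this is where local projectivity of $\Al^p$ and local freeness of $\Ml_\alpha$ are genuinely used, via the splitting of \eqref{eq-ex-seq-p}.

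For the second bullet, I would assemble the exact sequence \eqref{eq-elem-ex-seq-V} by gluing two pieces. The left half
\[
0 \to \lHom_{\Al}(\Al^p, \V) \to \bigoplus_{i\in I} \lHom_{\Ol_X}(\Al^p_i, \V_i) \xrightarrow{\gamma_{\Al^p,\V}} \bigoplus_{\alpha\in E} \lHom_{\Ol_X}(\Ml_\alpha \otimes_{\Ol_X} \Al^p_{t(\alpha)}, \V_{h(\alpha)})
\]
is exactly the identification $\ker \gamma_{\Al^p,\V} = \lHom_{\Ml Q}(\Al^p, \V)$ from Lemma \ref{lm-Kergamma}, together with the observation that, since $\Al^p$ is in fact an $\Al$-bimodule and $\V$ an $\Al$-module, $\lHom_{\Ml Q}(\Al^p,\V) = \lHom_{\Al}(\Al^p,\V)$ (the $\Ml Q$-linearity of a map $\Al^p \to \V$ forces $\Al$-linearity because $\Kl$ annihilates $\V$), and that $\gamma_{\Al^p,\V}$ is $\Al$-linear by Lemma \ref{lm-gamma-A-lin}. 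For the right half I must identify the cokernel of $\gamma_{\Al^p,\V}$ with $\lHom_{\Al}(\Al^{p+1},\V)$. The key step here is to apply $\lHom_{\Ml Q}(-,\V)$ to \eqref{eq-ex-seq-p} and use Lemma \ref{lm-iso-pi_(p*)}: the long exact sequence reads
\[
0 \to \lHom_{\Ml Q}(\Al^p,\V) \to \lHom_{\Ml Q}(\Kl^p,\V) \to \lHom_{\Ml Q}(\Kl^{p+1},\V) \to \lExt^1_{\Ml Q}(\Al^p,\V) \to \cdots
\]
but Lemma \ref{lm-iso-pi_(p*)} says the first map is an isomorphism, and applying the same lemma with $p$ replaced by $p+1$ identifies $\lHom_{\Ml Q}(\Kl^{p+1},\V) \simeq \lHom_{\Ml Q}(\Al^{p+1},\V) = \lHom_{\Al}(\Al^{p+1},\V)$. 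Meanwhile the complex $\lHom_{\Ol_X}(\Al^p_\bullet, \V_\bullet) \xrightarrow{\gamma_{\Al^p,\V}} \lHom_{\Ol_X}(\Ml_\alpha\otimes\Al^p_\bullet,\V_\bullet)$ computes, by the first bullet applied to $\Kl^p$ and by Lemma \ref{lm-Kergamma}, exactly the same groups: its kernel is $\lHom_{\Al}(\Al^p,\V)$ and — this is where I use the surjectivity of $\gamma_{\Kl^p,\V}$ and $\gamma_{\Kl^{p+1},\V}$ together with the splitting — its cokernel is $\lHom_{\Ml Q}(\Kl^{p+1},\V) = \lHom_{\Al}(\Al^{p+1},\V)$. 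Concretely, the connecting map sends $\{g_\alpha\}$ to the restriction of a lift to $\Kl^{p+1}$: given $\{g_\alpha\}$, lift it via the first bullet to $f \colon \Kl^p \to \V$ that is $\Ol_X$-linear degreewise but not $\Ml Q$-linear, with $\gamma_{\Kl^p,\V}(f) = \{g_\alpha\}$ meaning exactly that $f|_{\Kl^{p+1}}$ is $\Ml Q$-linear; the class of $\{g_\alpha\}$ in the cokernel then corresponds to $f|_{\Kl^{p+1}} \in \lHom_{\Ml Q}(\Kl^{p+1},\V) \simeq \lHom_{\Al}(\Al^{p+1},\V)$. Chasing the definitions shows this is well-defined (two lifts differ by an $\Ml Q$-linear $\Kl^p \to \V$, which by Lemma \ref{lm-iso-pi_(p*)} restricts to zero on $\Kl^{p+1}$) and surjective (any $\Ml Q$-linear $\Kl^{p+1}\to\V$ extends $\Ol_X$-linearly degreewise to $\Kl^p$ by local projectivity of $\Al^p$, again using the splitting of \eqref{eq-ex-seq-p}), which closes the sequence. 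Finally, every map in \eqref{eq-elem-ex-seq-V} is $\Al$-linear: the outer two by the bimodule structure, $\gamma_{\Al^p,\V}$ by Lemma \ref{lm-gamma-A-lin}, and the last map because it is induced by an $\ast$-$\Al$-linear map under the conventions \eqref{eq-(L1)}--\eqref{eq-(L2)}; checking that the two $\Al$-module structures match up under the isomorphisms is the last routine verification. The genuine difficulty, as already noted, is the first bullet — once surjectivity of $\gamma_{\Kl^p,\V}$ is in hand, the second bullet is essentially a diagram chase combining Lemmas \ref{lm-Kergamma}, \ref{lm-iso-pi_(p*)}, and \ref{lm-gamma-A-lin} with the local splitting of \eqref{eq-ex-seq-p}.
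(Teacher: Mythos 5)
Your treatment of the second bullet is essentially the paper's: you apply the rows of Lemma \ref{lm-Kergamma} to the locally split sequence \eqref{eq-ex-seq-p}, run the snake lemma, and use Lemma \ref{lm-iso-pi_(p*)} to convert the connecting morphism into the isomorphism $\coker\gamma_{\Al^{p},\V}\simeq\lHom_{\Ml Q}(\Kl^{p+1},\V)\simeq\lHom_{\Al}(\Al^{p+1},\V)$. That part is sound once the first bullet is available.

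The gap is in the first bullet, and you have correctly located it but not closed it. Two specific problems. First, the claim that under the local splitting $\Kl^{p}_{i}\simeq\Al^{p}_{i}\oplus\Kl^{p+1}_{i}$ the map $\gamma_{\Kl^{p},\V}$ ``becomes a direct sum of $\gamma_{\Kl^{p+1},\V}$ and $\gamma_{\Al^{p},\V}$'' is false: the splitting is only $\Ol_{X}$-linear, and $\rho_{\Kl^{p},\alpha}$ carries $\Ml_{\alpha}\otimes\Kl^{p+1}_{t(\alpha)}$ into $\Kl^{p+1}_{h(\alpha)}$ but carries the complementary summand into both pieces, so $\gamma_{\Kl^{p},\V}$ is only block \emph{triangular}. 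This matters because the diagonal block $\gamma_{\Al^{p},\V}$ is precisely \emph{not} surjective in general --- its cokernel is $\lHom_{\Al}(\Al^{p+1},\V)$, the whole content of the second bullet --- so no argument reducing surjectivity of $\gamma_{\Kl^{p},\V}$ to surjectivity of the diagonal blocks can succeed. Second, the explicit recipe ``define $f_{i}$ on the image of $\rho_{\Kl^{p},\alpha}$ using $g_{\alpha}$ and extend by zero on a complement'' is not a definition: the images of $\rho_{\Kl^{p},\alpha}$ for the various arrows with head $i$ overlap inside $\Kl^{p}_{i}$ (that is exactly what a relation is), $\rho_{\Kl^{p},\alpha}$ need not be injective, and the equation $\gamma_{\Kl^{p},\V}(\{f_{i}\})=\{g_{\alpha}\}$ reads $f_{h(\alpha)}\circ\rho_{\Kl^{p},\alpha}=g_{\alpha}+\rho_{\V,\alpha}\circ(\id_{\Ml_\alpha}\otimes f_{t(\alpha)})$, which couples the unknowns at different vertices and forces a degree-by-degree induction; that induction is well defined exactly when multiplication gives a \emph{direct sum} decomposition of the positive-degree part, which holds for the free tensor algebra $\Kl^{0}=\Ml Q$ but fails for $\Kl^{p}$ with $p\geq1$. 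The paper's route avoids all of this: it proves surjectivity of $\gamma_{\Ml Q,\V}$ (the case $p=0$, where the Gothen--King degreewise construction is legitimate), and then inducts on $p$ using the commutative square $\gamma_{\Kl^{p+1},\V}\circ\theta_{p,12}=\theta_{p,22}\circ\gamma_{\Kl^{p},\V}$, where $\theta_{p,12}$, $\theta_{p,22}$ are the restrictions along $\Kl^{p+1}\hookrightarrow\Kl^{p}$, together with the surjectivity of $\theta_{p,22}$ --- which is where (K1) actually enters. You have every ingredient of that square on the table; what is missing is the base case and this two-line induction in place of a direct construction for general $p$.
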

\begin{proof}
Throughout this  proof we write $\otimes$ for $\otimes_{\Ol_{X}}$.
Let $\V$ be a left $\Al$-module;   starting from the exact sequence \eqref{eq-ex-seq-p} it is easy to build up the   commutative diagram of left $\Ml Q$-modules shown in Figure \ref{1},
where we   let
\begin{equation}
\begin{aligned}
\theta_{p,11}=\bigoplus_{i\in I}\lHom_{\Ol_{X}}(\pi_{p,i},\V_i)\,,& \qquad \theta_{p,12}=\bigoplus_{i\in I}\lHom_{\Ol_{X}}(\iota_{p,i},\V_i)\\
\theta_{p,21}=\bigoplus_{\alpha\in E}\lHom_{\Ol_{X}}(\id_{\alpha}\otimes\pi_{p,t(\alpha)},\V_{h(\alpha)})\,,&\qquad \theta_{p,22}=\bigoplus_{\alpha\in E}\lHom_{\Ol_{X}}(\id_{\alpha}\otimes\iota_{p,t(\alpha)},\V_{h(\alpha)})\,.
\end{aligned}
\end{equation}
The surjectivity of $\theta_{p,12}$ and   $\theta_{p,22}$ follows from condition (K1). The exactness of the rows     is a consequence of Lemma \ref{lm-Kergamma} and Corollary  \ref{rem-MQ-NQ-A}. 

The surjectivity of $\gamma_{\Ml Q,\V}$ can be shown by arguing essentially as in \cite[Prop.~3.1]{GoKi}. Then the surjectivity of $\gamma_{\Kl^{p},\V}$ for all $p\geq 0$ can be proved inductively
 by using the diagram itself. This proves the first statement.

By applying the Snake Lemma to the    diagram in Figure 1  one deduces the existence of an exact sequence of left $\Ml Q$-modules of the following form:
\begin{equation}
\label{eq-Hom(A^p,V)-Snake-1}
0  \to  \lHom_{\Al}(\Al^{p},\V) \xrightarrow{a_p}   \lHom_{\Ml Q}(\Kl^{p},\V)  \to \lHom_{\Ml Q}(\Kl^{p+1},\V) \xrightarrow{\partial_p}   \coker\gamma_{\Al,\V}  \to  0
\,.
\end{equation}
It is easy to prove that $a_{p}=\pi_{p*}$ (see eq.~\eqref{eq-pi_(p*)}), so that Lemma \ref{lm-iso-pi_(p*)} implies that $a_{p}$ is an isomorphism. It follows that $\partial_{p}$ is an isomorphism as well. So one gets the  left $\Ml Q$-modules isomorphisms 
\begin{equation}
\coker\gamma_{\Al,\V}\simeq\lHom_{\Ml Q}(\Kl^{p+1},\V)\simeq\lHom_{\Ml Q}(\Al^{p+1},\V)\simeq\lHom_{\Al}(\Al^{p+1},\V)\,,
\label{eq-iso-cokergamma-Hom_A}
\end{equation}
where 
the last step follows from  $\Al$ being  a quotient of $\Ml Q$. The resulting isomorphism
$ \coker\gamma_{\Al,\V}\to \lHom_{\Al}(\Al^{p+1},\V)$
 is   $\Al$-linear.
\end{proof}
Let $\V$ be a left $\Al$-module; by splicing   the exact sequences \eqref{eq-elem-ex-seq-V} for all $p\geq0$, and   using the natural isomorphism $\lHom_{\Al}(\Al,\V)\simeq\V$, one obtains   an exact sequence of left $\Al$-modules  
\begin{equation}
\xymatrix{
0 \ar[r] & \V \ar[r] & \C^{0}(\V) \ar[r]^-{d^{\V}_{0}}  & \C^{1}(\V) \ar[r]^-{d^{\V}_{1}}  & \cdots
}
\label{eq-C(V)}
\end{equation}
where
\begin{equation}
\C^{p}(\V)=
\begin{cases}
\bigoplus_{i\in I}\lHom_{\Ol_{X}}(\Al^{p/2}_{i},\V_{i}) &\qquad\text{if $p$ is even}\\
\bigoplus_{\alpha\in E}\lHom_{\Ol_{X}}(\Ml_{\alpha}\otimes_{\Ol_{X}}\Al^{(p-1)/2}_{t(\alpha)},\V_{h(\alpha)})&\qquad\text{if $p$ is odd}
\end{cases}
\label{eq-C^p(V)}
\end{equation}
\begin{lemma}
\label{lm-C(V)-inj}
Suppose that condition {\rm (K2)} is satisfied, and let $\V$ be a left $\Al$-module which is $\V$ injective as an $\Ol_{X}$-module. Then $\C^{p}(\V)$ is an injective left $\Al$-module for all $p\geq 0$.
\end{lemma}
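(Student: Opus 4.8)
The plan is to recognize the summands of $\C^p(\V)$ as sheaves of the form $\lHom_{\Ol_X}(\F, \V)$ with $\F$ a locally free $\Ol_X$-module, and then to upgrade this to $\lHom_{\Al}$ of something, using the adjunction between restriction of scalars and $\lHom_{\Al}(\Al^k\otimes_\Al -, -)$ together with condition (K2). Concretely, recall from Proposition \ref{lm-elem-ex-seq-V} and eq.~\eqref{eq-iso-cokergamma-Hom_A} that each even term of $\C(\V)$ is $\bigoplus_{i\in I}\lHom_{\Ol_X}(\Al^{p/2}_i, \V_i) \simeq \lHom_{\Al}(\Al^{p/2}, \V)$ (after identifying $\lHom_{\Ol_X}(\Al^k_i,\V_i)$ summands appropriately), and each odd term is identified with $\lHom_{\Al}(\Al^{(p+1)/2}, \V)$ via the same sequence. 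So the first step is to reduce the statement to: \emph{if $\V$ is an injective $\Ol_X$-module, then $\lHom_{\Al}(\Al^k, \V)$ is an injective left $\Al$-module for all $k\ge 0$}, where the left $\Al$-module structure on $\Al^k = \Kl^k/\Kl^{k+1}$ used here is the one making it an $\Al$-bimodule (so that $\lHom_{\Al}(\Al^k,\V)$ inherits a left $\Al$-structure as in eq.~\eqref{eq-(L1)}).

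The second step is the adjunction. For a left $\Al$-module $\V$ there is a natural isomorphism $\lHom_{\Al}(\Al^k, \V) \simeq \lHom_{\Ol_X}(\Al^k \otimes_{\Al} \Ol_X\!\text{-something}, \V)$; more precisely, since $\Al^k$ is an $\Al$-bimodule, for any left $\Al$-module $\M$ one has $\Hom_{\Al}(\M, \lHom_{\Al}(\Al^k,\V)) \simeq \Hom_{\Al}(\Al^k \otimes_{\Al}\M, \V)$, i.e.\ $\lHom_{\Al}(\Al^k, \V)$ represents the functor $\M \mapsto \lHom_{\Al}(\Al^k\otimes_\Al \M, \V)$. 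To conclude injectivity, one wants this functor to be exact, which will follow once we know that (i) $\Al^k \otimes_\Al -$ is exact on left $\Al$-modules, and (ii) $\lHom_{\Ol_X}(-, \V)$ is exact, the latter being exactly the hypothesis that $\V$ is injective as an $\Ol_X$-module. For (i), the exact sequence $0 \to \Kl^{k+1} \to \Kl^k \to \Al^k \to 0$ of right $\Ml Q$-modules, together with condition (K2) that $\Kl$ (hence each $\Kl^k$, and then each $\Al^k$) is flat as a right $\Ml Q$-module, gives that $\Al^k$ is flat as a right $\Al$-module — so $\Al^k\otimes_\Al -$ is exact. One also needs $\Al^k\otimes_\Al \M$, for $\M$ a left $\Al$-module, to be an $\Ol_X$-module for which $\lHom_{\Ol_X}(\Al^k\otimes_\Al\M, \V)$ computes correctly, but this is automatic.

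The key subtlety — and what I expect to be the main obstacle — is matching up the two $\Al$-module structures and checking that the adjunction isomorphism is $\Al$-linear for the correct (the $\ast$-type versus the usual) structure, and in particular that the identification of $\C^p(\V)$'s summands with $\lHom_{\Al}(\Al^k,\V)$ from Proposition \ref{lm-elem-ex-seq-V} is compatible with the $\Al$-module structures entering the injectivity argument. One must be careful: in eq.~\eqref{eq-iso-cokergamma-Hom_A} the intermediate isomorphisms $\lHom_{\Ml Q}(\Kl^{p+1},\V)\simeq \lHom_{\Ml Q}(\Al^{p+1},\V)$ are of left $\Ml Q$- and $\ast$-left $\Al$-modules, whereas the final $\Al$-linear structure on $\coker\gamma_{\Al,\V}$ is the usual one coming from the right $\Al$-action on $\Al^{p+1}$; reconciling these needs the observation (true for the bimodule $\Al^k$) that the two left $\Al$-structures on $\lHom_{\Al}(\Al^k,\V)$ coincide, or at least that whichever structure one uses, injectivity as an $\Al$-module is the same statement. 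I would dispatch this by working out the adjunction once, carefully, at the level of $\lHom_{\Al}(\M, -)$ for arbitrary left $\Al$-module $\M$, and reading off injectivity from exactness of $\M\mapsto \lHom_{\Ol_X}(\Al^k\otimes_\Al\M,\V)$; the bookkeeping of module structures is routine once the adjunction is set up correctly, so I would not belabour it in the write-up beyond pointing to (K2) for flatness and to injectivity of $\V$ over $\Ol_X$ for the other half.
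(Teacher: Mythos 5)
There is a genuine gap, and it occurs right at the start. Your reduction identifies the even term $\C^{p}(\V)=\bigoplus_{i\in I}\lHom_{\Ol_{X}}(\Al^{p/2}_{i},\V_{i})$ with $\lHom_{\Al}(\Al^{p/2},\V)$, and the odd term with $\lHom_{\Al}(\Al^{(p+1)/2},\V)$. But the exact sequence \eqref{eq-elem-ex-seq-V} says precisely that $\lHom_{\Al}(\Al^{p},\V)$ is the \emph{kernel} of $\gamma_{\Al^{p},\V}$, i.e.\ a proper subsheaf of $\bigoplus_{i}\lHom_{\Ol_{X}}(\Al^{p}_{i},\V_{i})$, and that $\lHom_{\Al}(\Al^{p+1},\V)$ is the \emph{cokernel} of $\gamma_{\Al^{p},\V}$, i.e.\ a quotient of the odd term --- neither is the term itself. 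Since injectivity passes neither to subobjects nor to quotients, even a correct proof that $\lHom_{\Al}(\Al^{k},\V)$ is injective would not give injectivity of $\C^{p}(\V)$. Moreover, your adjunction is set up with the wrong inner Hom: for the bimodule $\Al^{k}$ one has $\Hom_{\Al}\bigl(\M,\lHom_{\Al}(\Al^{k},\V)\bigr)\simeq\Hom_{\Al}(\Al^{k}\otimes_{\Al}\M,\V)$, and exactness of the right-hand side in $\M$ would require $\V$ to be injective as an $\Al$-module, which is not the hypothesis and is essentially what one is trying to establish; the hypothesis that $\V$ is injective over $\Ol_{X}$ only makes $\Hom_{\Ol_{X}}(-,\V_{i})$ exact.

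The fix is to apply your adjunction idea directly to the actual summands of $\C^{p}(\V)$, which are already of the form $\lHom_{\Ol_{X}}(\F,\V_{i})$ with $\F=\Al^{p/2}_{i}$ or $\F=\Ml_{\alpha}\otimes_{\Ol_{X}}\Al^{(p-1)/2}_{t(\alpha)}$ --- no identification with $\lHom_{\Al}(\Al^{k},\V)$ is needed. The flatness part of your argument is exactly right and is the real content: (K2) gives by induction (via $\Kl^{q}\otimes_{\Ml Q}\Kl\simeq\Kl^{q+1}$) that each $\Kl^{k}$ is flat over $\Ml Q$, hence $\Al^{k}\simeq\Kl^{k}\otimes_{\Ml Q}\Al$ is a flat right $\Al$-module, and so are its direct summands $\Al^{k}_{i}$ and the sheaves $\Ml_{\alpha}\otimes_{\Ol_{X}}\Al^{k}_{t(\alpha)}$ (using that $\Ml_{\alpha}$ is locally free). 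Then the correct form of the adjunction,
\begin{equation}
\Hom_{\Al}\bigl(-,\lHom_{\Ol_{X}}(\F,\V_{i})\bigr)\simeq\Hom_{\Ol_{X}}(\F\otimes_{\Al}-,\V_{i})\,,
\end{equation}
with $\F$ a flat right $\Al$-module and $\V_{i}$ an injective $\Ol_{X}$-module, exhibits each summand of $\C^{p}(\V)$ as representing an exact functor, hence as an injective left $\Al$-module; the worry about reconciling the two left $\Al$-structures then disappears, because the left $\Al$-structure on $\lHom_{\Ol_{X}}(\F,\V_{i})$ comes solely from the right $\Al$-action on $\F$.
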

\begin{proof}
We claim that $\Kl^{p}$ is a flat right $\Ml Q$-module for all $p\geq0$. For $p=0$ this is trivial, and for $p=1$ this is condition (K2). By induction, suppose that $\Kl^{p}$ is flat as a right $\Ml Q$-module for $p=1,\dots,q$, for some $q\geq 1$. This implies that the product $\Kl^{q}\otimes_{\Ml Q}\Kl$ is a flat right $\Ml Q$-module. By \cite[Prop.~4.12]{Lam} one has the following isomorphism of $\Ml Q$-bimodules:
\begin{equation}
\Kl^{q}\otimes_{\Ml Q}\Kl\simeq\Kl^{q+1}\qquad\text{for all}\quad q\geq0
\label{eq-iso-K^p-x-K}
\end{equation}
This implies that $\Kl^{q+1}$ is a flat right $\Ml Q$-module as well. This proves the claim.

Due to the   right $\Al$-module isomorphism
$\Kl^{p}\otimes_{\Ml Q}\Al\simeq\Al^{p}$,
and since flatness is stable under base change, $\Al^{p}$ is a flat right $\Al$-module for all $p\geq0$. By eq.~\eqref{eq-V=sum_iV_i},
 % and \cite[Prop.~4.2]{Lam}  
 the sheaves $\Al^{p}_{i}$ and $\Ml_{\alpha}\otimes_{\Ol_{X}}\Al^{p}_{t(\alpha)}$ are flat right $\Al$-modules for all $i\in I$, $\alpha\in E$ and $p\geq0$. Moreover eq.~\eqref{eq-V=sum_iV_i} 
 % and \cite[Prop.~5.2.2]{ARM1} 
 implies that the sheaves $\V_{i}$ are injective $\Ol_{X}$-modules for all $i\in I$.

Let $\F$ be a flat right $\Al$-module; there  is a natural isomorphism of functors
\begin{equation}
\Hom_{\Al}(-,\lHom_{\Ol_{X}}(\F,\V_{i}))\simeq\Hom_{\Ol_{X}}(\F\otimes_{\Al}-,\V_{i})\,.
\end{equation}
Since the functor on the right-hand side is exact, the thesis follows (cf.~\cite[Lemma 3.5]{Lam}).
\end{proof}
Let $\W$ be a left $\Al$-module, so also an $\Ol_{X}$-module by restriction of scalars. We choose a resolution $(\W^{\bullet},\partial_{\bullet})$ of $\W$ by injective $\Ol_{X}$-modules:
\begin{equation}
\xymatrix{
0 \ar[r] & \W \ar[r]
 \ar[r] & \W^{0} \ar[r]^-{\partial_{0}} & \W^{1} \ar[r]^-{\partial_{1}} & \W^{2} \ar[r]^-{\partial_{2}} & \cdots
}
\label{eq-W*}
\end{equation}
By the lifting property of injective resolutions,
%\cite[Thm.~2.3.7]{Weib} 
every $\W^{q}$ inherits a structure of left $\Al$-module such that the differentials $\partial_{q}$ are $\Al$-linear. In particular one can perform a decomposition as in eq.~\eqref{eq-V=sum_iV_i}, and one has   inclusions $\partial_{q}(\W^{q}_{i})\subseteq\W^{q+1}_{i}$ for all $q\geq 0$. We call $\partial_{q,i}$ the restriction of $\partial_{q}$ to $\W^{q}_{i}$, for all $i\in I$ and $q\geq 0$. By using the exact sequence \eqref{eq-C(V)} associated with each term of the resolution \eqref{eq-W*}, one can define a double complex of left $\Al$-modules $(\C^{\bullet,\bullet}(\W),d,\partial)$ by putting
\begin{align}
\C^{p,q}(\W)&=\C^{p}(\W^{q})\\
d_{p,q}&=d_{p}^{\W^{q}}
\label{eq-C^(p,q)-d_(p,q)}
\\
\partial_{p,q}&=
\begin{cases}
\mbox{$\bigoplus_{i\in I}$}\lHom_{\Ol_{X}}(\Al^{p/2}_{i},\partial_{q,i}) &\qquad \text{if $p$ is even}\\
-\mbox{$\bigoplus_{\alpha\in E}$}\lHom_{\Ol_{X}}(\Ml_{\alpha}\otimes_{\Ol_{X}}\Al^{(p-1)/2}_{t(\alpha)},\partial_{q,h(\alpha)}) &\qquad \text{if $p$ is odd}
\end{cases}
\end{align}
for all $p,q\geq 0$. Let $(\T^{\bullet},D_{\bullet})$ be the total complex associated to $(\C^{\bullet,\bullet}(\W),d,\partial)$:
\begin{equation}
\xymatrix{
\T^{0} \ar[r]^-{D_{0}} & \T^{1} \ar[r]^-{D_{1}} & \T^{2} \ar[r]^-{D_{2}} & \cdots
}
\label{eq-T*}
\end{equation}
\begin{lemma}
The complex $(\T^{\bullet},D_{\bullet})$ is a resolution of $\W$ by injective left $\Al$-modules.
\label{lm-T*-inj-res}
\end{lemma}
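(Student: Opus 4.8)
The plan is to show separately that $\T^\bullet$ is a resolution of $\W$ and that each $\T^n$ is an injective left $\Al$-module, exploiting the double complex structure of $(\C^{\bullet,\bullet}(\W),d,\partial)$.

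First I would prove injectivity of the $\T^n$. Each $\T^n = \bigoplus_{p+q=n}\C^{p,q}(\W) = \bigoplus_{p+q=n}\C^p(\W^q)$, so since a finite direct sum of injective left $\Al$-modules is injective, it suffices to observe that each $\W^q$ is an injective $\Ol_X$-module by construction of the resolution \eqref{eq-W*}, and then invoke Lemma \ref{lm-C(V)-inj} to conclude that each $\C^p(\W^q)$ is an injective left $\Al$-module. This uses condition (K2), which is in force.

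Next I would prove that $\T^\bullet$ is a resolution of $\W$, i.e. that the complex
\begin{equation*}
\xymatrix{
0 \ar[r] & \W \ar[r] & \T^0 \ar[r]^-{D_0} & \T^1 \ar[r]^-{D_1} & \cdots
}
\end{equation*}
is exact. The natural approach is a spectral-sequence (or iterated-cohomology) argument on the first-quadrant double complex: filter by columns and compute the cohomology of $(\C^{\bullet,q}(\W),d)$ first. Here the crucial input is the exactness of \eqref{eq-C(V)}, which by Proposition \ref{lm-elem-ex-seq-V} holds for every left $\Al$-module, in particular for each $\W^q$ (using condition (K1), also in force), together with the identification $\lHom_{\Al}(\Al,\W^q)\simeq\W^q$. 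Thus for each fixed $q$ the augmented column $0\to\W^q\to\C^{0,q}(\W)\to\C^{1,q}(\W)\to\cdots$ is exact, so the $d$-cohomology of the (unaugmented) double complex is concentrated in column $p=0$, where it equals $\W^q$, with horizontal differential induced by $\partial$ equal (up to sign) to $\bigoplus_i\lHom_{\Ol_X}(\Al^0_i,\partial_{q,i}) = \partial_q$ via $\C^{0,q}(\W)=\bigoplus_i\lHom_{\Ol_X}(\On\,e_i,\W^q_i)\simeq\W^q$. Hence the $E_1$-page of this spectral sequence is the complex $(\W^\bullet,\partial_\bullet)$ placed in column $0$, whose cohomology is $\W$ in degree $0$ and $0$ elsewhere, precisely because \eqref{eq-W*} is a resolution of $\W$. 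Since the double complex is first-quadrant, the spectral sequence converges, and we get $H^n(\T^\bullet)\simeq\W$ for $n=0$ and $0$ for $n>0$, together with the augmentation $\W\hookrightarrow\T^0$ coming from $\W\hookrightarrow\W^0\hookrightarrow\C^0(\W^0)=\C^{0,0}(\W)$.

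I expect the main obstacle to be purely bookkeeping: checking that under the identification $\C^{0,q}(\W)\simeq\W^q$ the horizontal differential $\partial_{0,q}$ really corresponds to $\partial_q$ (so that the $E_1$-complex is genuinely \eqref{eq-W*} and not some twisted version), and making sure the signs introduced in the definition of $\partial_{p,q}$ for odd $p$ do not disturb this — but these signs are there exactly to make $(\C^{\bullet,\bullet}(\W),d,\partial)$ a bona fide double complex, so they cause no trouble at the level of column cohomology. A minor point worth a sentence is that $\T^\bullet$ is a complex of \emph{left $\Al$-modules} with $\Al$-linear differentials, which follows because all the maps $d_p^{\W^q}$ and $\partial_{p,q}$ were already shown to be $\Al$-linear (the former by Proposition \ref{lm-elem-ex-seq-V} via Lemma \ref{lm-gamma-A-lin}, the latter by the $\Al$-linearity of the $\partial_q$).
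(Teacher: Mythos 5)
Your argument is correct and is essentially the paper's (very terse) proof written out in full: injectivity of each $\T^n$ comes from Lemma \ref{lm-C(V)-inj} applied to the injective $\Ol_X$-modules $\W^q$, and exactness of the augmented total complex is the standard first-quadrant double-complex argument --- you take $d$-cohomology first, using the exactness of the rows $0\to\W^q\to\C^{\bullet}(\W^q)$ guaranteed by Proposition \ref{lm-elem-ex-seq-V}, whereas the paper gestures at the other augmentation $0\to\C^{\bullet}(\W)\to\C^{\bullet,\bullet}(\W)$, whose columns are exact by condition (K1); both filtrations work. One parenthetical slip to fix: $\C^{0,q}(\W)=\bigoplus_{i\in I}\lHom_{\Ol_X}(\Al_i,\W^q_i)$ is not itself isomorphic to $\W^q$ --- rather it is $\ker d_{0,q}=\lHom_{\Al}(\Al,\W^q)$ that is naturally identified with $\W^q$, and the induced differential on this kernel is $\partial_q$ because the embedding $\W^q\hookrightarrow\C^{0,q}(\W)$ is compatible with $\partial_{0,q}$.
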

\begin{proof}
The double complex $(\C^{\bullet,\bullet}(\W),d,\partial)$ fits into the diagram
\begin{equation}
\xymatrix{
0 \ar[r] & \C^{\bullet}(\W) \ar[r] & \C^{\bullet,\bullet}(\W)
}\,.
\end{equation}
$(\T^{\bullet},D_{\bullet})$ is exact in positive degree, and $\ker D_{0}\simeq \W$.
% (see e.g.~\cite[Lemma 8.5]{Voisin}).

Finally, Lemma \ref{lm-C(V)-inj} 
%and \cite[Prop.~5.2.2]{ARM1}
 implies that the sheaf $\T^{p}$ is an injective left $\Al$-module for all $p\geq0$.
\end{proof}
We have now all of the ingredients needed to prove Theorem \ref{thm-spect-seq}.
\begin{proof}[Proof of Theorem \ref{thm-spect-seq}.]
To prove the first statement we  apply the functor $\lHom_{\Al}(\V,-)$ to the double complex $(\C^{\bullet,\bullet}(\W),d,\partial)$, getting another double complex, which we call  $(\El^{\bullet,\bullet}_{0}(\V,\W),\hat d,\hat \partial)$. So
\begin{multline}
\El^{p,q}_{0}=\El^{p,q}_{0}(\V,\W)=\lHom_{\Al}(\V,\C^{p,q}(\W))\simeq\\
\simeq
\begin{cases}
\bigoplus_{i\in I}\lHom_{\Ol_{X}}\big((\Al^{p/2}\otimes_{\Al}\V)_{i},\W^{q}_{i}\big) &\qquad\text{if $p$ is even}\\
\bigoplus_{\alpha\in E}\lHom_{\Ol_{X}}\big(\Ml_{\alpha}\otimes_{\Ol_{X}}(\Al^{(p-1)/2}\otimes_{\Al}\V)_{t(\alpha)},\W^{q}_{h(\alpha)}\big) &\qquad\text{if $p$ is odd}
\end{cases}
\label{eq-iso-El_0^(p,q)}
\end{multline}
\begin{equation}
\hat d_{p,q}=\lHom_{\Al}(\V,d_{p,q})
\label{eq-El_0^(p,q)-d}
\end{equation}
\begin{multline}
\hat \partial_{p,q}=\lHom_{\Al}(\V,\partial_{p,q})
\simeq\\
\simeq
\begin{cases}
\bigoplus_{i\in I}
\lHom_{\Ol_{X}}\big((\Al^{p/2}\otimes_{\Al}\V)_{i},\partial_{q,i}\big)
&\qquad\text{if $p$ is even}\\
-\bigoplus_{\alpha\in E}
\lHom_{\Ol_{X}}\big(\Ml_{\alpha}\otimes_{\Ol_{X}}(\Al^{(p-1)/2}\otimes_{\Al}\V)_{t(\alpha)},\partial_{q,h(\alpha)}\big)
&\qquad\text{if $p$ is odd.}
\end{cases}
\label{eq-El_0^(p,q)-partial}
\end{multline}
We have used the isomorphism
 $\Al^{r}_{i}\otimes_{\Al}\V\simeq (\Al^{r}\otimes_{\Al}\V)_{i}$ for all $r\geq0$ and $i\in I$ , and   have applied the Hom-tensor product adjunction.
 %\cite[Prop.~II.4.1.1.(a)]{BourAlg1}. 
 Note that the $\Ol_{X}$-modules $(\Al^{r}\otimes_{\Al}\V)_{i}$ and $\W^{q}_{i}$  may fail to be left $\Al$-modules, so that the sheaves $\El^{p,q}_{0}(\V,\W)$ are not left $\Al$-modules in general, and the natural isomorphism used in eq.~\eqref{eq-iso-El_0^(p,q)}  may be $\Ol_{X}$-linear but not $\Al$-linear.

There are two first quadrant spectral sequences 
associated to $(\El^{\bullet,\bullet}_{0}(\V,\W),\hat d,\hat\partial)$,   both  
 abutting to
\begin{align}
\Hg^{n}\left(\lHom_{\Al}(\V,\T^{\bullet}),\widehat D\right),&\qquad\text{where}\qquad \widehat D =\lHom_{\Al}(\V,D)\,.
\end{align}
Lemma \ref{lm-T*-inj-res} implies that
\begin{equation}
\Hg^{n}\left(\lHom_{\Al}(\V,\T^{\bullet}),\widehat D\right)\simeq\lExt^{n}_{\Al}(\V,\W)
\end{equation}
for all $n\geq 0$. Since $(\W^{\bullet},\partial)$ is a resolution of $\W$ by injective $\Ol_{X}$-modules, by taking the cohomology of the double complex $\El^{\bullet,\bullet}_{0}$ with respect to the differential $\hat \partial$  one gets eq.~\eqref{eq-loc-E_1^(p,q)}.

The proof of the second statement is analogous.
\end{proof}

If  $\Kl=0$, so that $\Al=\Ml Q$, Theorem \ref{thm-spect-seq} reduces to \cite[Thm.~4.1]{GoKi}, which we restate here in a sligthly different form.
\begin{cor}
\label{cor-GoKi}
Let $\V$ and $\W$ be two left $\Ml Q$-modules.
\begin{itemize}
\item
There exist two collections of $\Ol_{X}$-modules $\{\El^{q}_{-}(\V,\W)\}_{q=0}^{\infty}$ and $\{\El^{q}_{+}(\V,\W)\}_{q=0}^{\infty}$ that fit into exact sequences 
\begin{multline}
\xymatrix{
0 \ar[r] & \El^{q}_{-}(\V,\W) \ar[r] & \bigoplus_{i\in I}\lExt^{q}_{\Ol_{X}}(\V_{i},\W_{i}) \ar[r] &
}\\
\xymatrix{
\ar[r] & \bigoplus_{\alpha\in E}\lExt^{q}_{\Ol_{X}}\big(\Ml_{\alpha}\otimes_{\Ol_{X}}\V_{t(\alpha)},\W_{h(\alpha)}\big) \ar[r] & \El^{q}_{+}(\V,\W) \ar[r] & 0
}\,,
\label{eq-ex-seq-E_(+-)}
\end{multline}
for all $q\geq0$ and such that
\begin{equation}
\lExt^{q}_{\Ml Q}(\V,\W)\simeq
\begin{cases}
\El^{0}_{-}(\V,\W) &\qquad\text{if}\quad q=0\\[3pt]
\El^{q}_{-}(\V,\W)\oplus\El^{q-1}_{+}(\V,\W) &\qquad\text{if}\quad q>0
\end{cases}\,.
\label{eq-iso-E_(+-)}
\end{equation}
If $q=0$, the middle morphism in the sequence \eqref{eq-ex-seq-E_(+-)} is $\gamma_{\V,\W}$.
\item
There exist two collections of $\Gamma(X,\Ol_{X})$-modules $\{E^{q}_{-}(\V,\W)\}_{q=0}^{\infty}$ and $\{E^{q}_{+}(\V,\W)\}_{q=0}^{\infty}$ that fit into exact sequences  
\begin{multline}
\xymatrix{
0 \ar[r] & E^{q}_{-}(\V,\W) \ar[r] & \bigoplus_{i\in I}\Ext^{q}_{\Ol_{X}}(\V_{i},\W_{i}) \ar[r] &
}\\
\xymatrix{
\ar[r] & \bigoplus_{\alpha\in E}\Ext^{q}_{\Ol_{X}}\big(\Ml_{\alpha}\otimes_{\Ol_{X}}\V_{t(\alpha)},\W_{h(\alpha)}\big) \ar[r] & E^{q}_{+}(\V,\W) \ar[r] & 0
}\,,
\end{multline}
for all $q\geq0$ and such that
\begin{equation}
\Ext^{q}_{\Ml Q}(\V,\W)\simeq
\begin{cases}
E^{0}_{-}(\V,\W) &\qquad\text{if}\quad q=0\\
E^{q}_{-}(\V,\W)\oplus E^{q-1}_{+}(\V,\W) &\qquad\text{if}\quad q>0
\end{cases}
\end{equation}

\end{itemize}
\end{cor}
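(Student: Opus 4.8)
The plan is to deduce Corollary~\ref{cor-GoKi} from Theorem~\ref{thm-spect-seq} by specialising to $\Kl=0$, in which case the spectral sequence collapses onto two columns. First I would check that the hypotheses of Theorem~\ref{thm-spect-seq} are automatic: with $\Kl=0$ one has $\Kl^{0}=\Ml Q$, hence $\Al^{0}=\Ml Q/\Kl=\Ml Q$, a locally free $\Ol_{X}$-module (being a direct sum of tensor products of the locally free modules $\Ml_{\alpha}$), while $\Al^{p}=\Kl^{p}/\Kl^{p+1}=0$ for all $p\geq 1$; and $\Kl=0$ is flat as a right $\Ml Q$-module. So (K1) and (K2) hold, and both spectral sequences of Theorem~\ref{thm-spect-seq} are available.

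Next I would compute their first pages. Since $\Al^{0}\otimes_{\Al}\V=\Ml Q\otimes_{\Ml Q}\V\simeq\V$ and $\Al^{p}=0$ for $p\geq 1$, formula~\eqref{eq-loc-E_1^(p,q)} reduces to
\[
\El^{0,q}_{1}(\V,\W)=\bigoplus_{i\in I}\lExt^{q}_{\Ol_{X}}(\V_{i},\W_{i}),\qquad
\El^{1,q}_{1}(\V,\W)=\bigoplus_{\alpha\in E}\lExt^{q}_{\Ol_{X}}\big(\Ml_{\alpha}\otimes_{\Ol_{X}}\V_{t(\alpha)},\W_{h(\alpha)}\big),
\]
while $\El^{p,q}_{1}(\V,\W)=0$ for $p\geq 2$; so the spectral sequence is concentrated in the columns $p=0$ and $p=1$. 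Consequently every $d_{r}$ with $r\geq 2$ vanishes --- its target lies in a column $p\geq 2$ --- so $\El_{\infty}=\El_{2}$, and the only possibly non-zero differential is $d_{1}\colon\El^{0,q}_{1}\to\El^{1,q}_{1}$. By construction of the double complex $(\El^{\bullet,\bullet}_{0},\hat d,\hat\partial)$, its horizontal differential in bidegree $(0,q)$ is $\lHom_{\Al}(\V,d^{\W^{q}}_{0})$, and $d^{\W^{q}}_{0}=\gamma_{\Al^{0},\W^{q}}=\gamma_{\Ml Q,\W^{q}}$ by~\eqref{eq-elem-ex-seq-V}; after the Hom--tensor adjunction and the identifications used in the proof of Theorem~\ref{thm-spect-seq} this becomes $\gamma_{\V,\W^{q}}$, so on cohomology $d_{1}$ is the natural morphism $\gamma^{q}$ on $\lExt^{q}$, equal to $\gamma_{\V,\W}$ when $q=0$. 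Setting $\El^{q}_{-}(\V,\W):=\El^{0,q}_{2}=\ker\gamma^{q}$ and $\El^{q}_{+}(\V,\W):=\El^{1,q}_{2}=\coker\gamma^{q}$, the four-term exact sequence~\eqref{eq-ex-seq-E_(+-)} drops out, with middle morphism $\gamma_{\V,\W}$ for $q=0$.

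Finally I would read off the abutment. The filtration on $\lExt^{n}_{\Ml Q}(\V,\W)$ has at most two steps, with graded pieces $\El^{0,n}_{\infty}=\El^{n}_{-}$ and $\El^{1,n-1}_{\infty}=\El^{n-1}_{+}$. For $n=0$ only the first survives, since $\El^{-1}_{+}=0$, giving $\lExt^{0}_{\Ml Q}(\V,\W)\simeq\El^{0}_{-}(\V,\W)$; for $n>0$ one gets a short exact sequence
\[
0\longrightarrow\El^{n-1}_{+}(\V,\W)\longrightarrow\lExt^{n}_{\Ml Q}(\V,\W)\longrightarrow\El^{n}_{-}(\V,\W)\longrightarrow 0 .
\]
Splicing these together with the four-term ones reproduces exactly the long exact sequence of \cite[Thm.~4.1]{GoKi}. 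The one step I expect to need a genuine argument rather than a formal one is the splitting of the displayed sequence for $n>0$, which is what turns it into the asserted direct-sum decomposition; I would attempt it as in \cite{GoKi}, extracting the splitting from the length-one resolution of $\V$ by the $\Ml Q$-modules $\bigoplus_{i\in I}\Ml Q\, e_{i}\otimes_{\Ol_{X}}\V_{i}$ and $\bigoplus_{\alpha\in E}\Ml Q\, e_{h(\alpha)}\otimes_{\Ol_{X}}\Ml_{\alpha}\otimes_{\Ol_{X}}\V_{t(\alpha)}$, which is split as a sequence of $\Ol_{X}$-modules. The global statement is proved verbatim, replacing $\lExt$, $\lHom$ by $\Ext$, $\Hom$ and using part~b) of Theorem~\ref{thm-spect-seq} in place of part~a).
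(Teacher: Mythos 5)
Your proposal follows essentially the same route as the paper's proof: specialise the spectral sequence of Theorem \ref{thm-spect-seq} to $\Kl=0$, note that the first page is concentrated in the columns $p=0,1$ so that the sequence degenerates at the second page, identify the $d_1$ differential on the $q=0$ row with $\gamma_{\V,\W}$, and read off the four-term sequences and the abutment. If anything you are more explicit than the paper, which asserts the direct-sum decomposition \eqref{eq-iso-E_(+-)} as following directly from the degeneration, whereas you correctly observe that the two-step filtration a priori only gives a short exact sequence $0\to\El^{q-1}_{+}\to\lExt^{q}_{\Ml Q}(\V,\W)\to\El^{q}_{-}\to 0$ and that the splitting requires the additional input of the $\Ol_X$-split standard resolution as in \cite{GoKi}.
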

\begin{proof}
We prove only the first statement as the second  is completely analogous.

As we may assume $\Kl=0$, with the notation of eq.~\eqref{eq-loc-E_1^(p,q)} one has
\begin{equation}
\El^{p,q}_{1}(\V,\W)=
\begin{cases}
\bigoplus_{i\in I}\lExt^{q}_{\Ol_{X}}(\V_{i},\W_{i}) &\qquad\text{if $p=0$}\\
\bigoplus_{\alpha\in E}\lExt^{q}_{\Ol_{X}}\big(\Ml_{\alpha}\otimes_{\Ol_{X}}\V_{t(\alpha)},\W_{h(\alpha)}\big) &\qquad\text{if $p=1$}\\
0 &\qquad\text{if $p>1$}
\end{cases}\,.
\label{eq-loc-E_1^(p,q)-K=0}
\end{equation}
Thus the spectral sequence stabilizes at the second step, with
\begin{equation}
\El^{p,q}_{\infty}\simeq
\begin{cases}
\El^{p,q}_{2} &\qquad \text{if} \qquad p=0,1\,;\\
0 & \qquad \text{if} \qquad p>1
\end{cases}\,.
\label{eq-E^(p,q)_r-r>1-K=0}
\end{equation}
Since the sheaves $
\El^{p,q}_{2}$ fit into exact sequences of the   form
\begin{equation}
\xymatrix{
0 \ar[r] & \El^{0,q}_{2} \ar[r] & \El^{0,q}_{1} \ar[r] & \El^{1,q}_{1} \ar[r] & \El^{1,q}_{2} \ar[r] & 0
}\,,
\end{equation}
eqs.~\eqref{eq-ex-seq-E_(+-)} and \eqref{eq-iso-E_(+-)} follow from Theorem \ref{thm-spect-seq}.

One can check that, up to composition with the natural isomorphisms
$$
\lHom_{\Ml Q}\big(\V,\mbox{$\bigoplus_{i\in I}$}\lHom_{\Ol_{X}}((\Ml Q)_{i},\W_{i})\big)\simeq\mbox{$\bigoplus_{i\in I}$}\lHom_{\Ol_{X}}(\V_{i},\W_{i})
$$
and 
\begin{multline}
\lHom_{\Ml Q}\big(\V,\mbox{$\bigoplus_{\alpha\in E}$}\lHom_{\Ol_{X}}(\Ml_{\alpha}\otimes_{\Ol_{X}}(\Ml Q)_{t(\alpha)},\W_{h(\alpha)})\big)  \simeq   \\
\mbox{$\bigoplus_{\alpha\in E}$}\lHom_{\Ol_{X}}(\Ml_{\alpha}\otimes_{\Ol_{X}}\V_{t(\alpha)},\W_{h(\alpha)}),
\end{multline}
one has
\begin{equation}
\lHom_{\Ml Q}\big(\V,\gamma_{\Ml Q,\W})=\gamma_{\V,\W}\,.
\end{equation}
\end{proof}

The statement  in Theorem \ref{thm-spect-seq} simplifies substantially
whenever the sheaf $\V$ is locally free as an $\Ol_{X}$-module. For any pair $(\V,\W)$ of left $\Al$-modules we introduce the  complex
$\C^{\bullet}(\V,\W) = \El_{1}^{\bullet,0}(\V,\W)$ (see eq.~\eqref{eq-loc-E_1^(p,q)}) with the differentials 
$d^{\V,\W}$ induced by the spectral sequence.
Explicitly one has:
\begin{equation}
\C^{p}(\V,\W)=\begin{cases}
\bigoplus_{i\in I}\lHom_{\Ol_{X}}\big((\Al^{p/2}\otimes_{\Al}\V)_{i},\W_{i}\big) &\qquad\text{if $p$ is even}\\
\bigoplus_{\alpha\in E}\lHom_{\Ol_{X}}\big(\Ml_{\alpha}\otimes_{\Ol_{X}}(\Al^{(p-1)/2}\otimes_{\Al}\V)_{t(\alpha)},\W_{h(\alpha)}\big)&\qquad\text{if $p$ is odd}
\end{cases}
\label{eq-C^p(V,W)}
\end{equation}
\begin{cor}
\label{cor-spect-seq-V-loc-free}
Let $\V$ and $\W$ be two left $\Al$-modules. Suppose that 
conditions {\rm(K1)}--{\rm(K2)} are satisfied, 
and that $\V$ is locally free as an $\Ol_{X}$-module. For all $p\geq0$ there are isomorphisms
\begin{equation}
\lExt^{p}_{\Al}(\V,\W)\simeq\Hg^{p}\big(\C^{\bullet}(\V,\W),d^{\V,\W}_{\bullet}\big)\,.
\end{equation}
\end{cor}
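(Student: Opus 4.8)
The plan is to show that when $\V$ is locally free as an $\Ol_X$-module, the first-page spectral sequence $\El_1^{p,q}(\V,\W)$ degenerates in the $q$-direction, collapsing onto the row $q=0$, so that the abutment $\lExt_{\Al}^{p+q}(\V,\W)$ is computed directly by the cohomology of that single row, which is exactly the complex $\C^\bullet(\V,\W)$.

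First I would examine the entries $\El_1^{p,q}(\V,\W)$ from \eqref{eq-loc-E_1^(p,q)}. For $p$ even they are $\bigoplus_{i\in I}\lExt^q_{\Ol_X}\bigl((\Al^{p/2}\otimes_\Al\V)_i,\W_i\bigr)$, and for $p$ odd they are $\bigoplus_{\alpha\in E}\lExt^q_{\Ol_X}\bigl(\Ml_\alpha\otimes_{\Ol_X}(\Al^{(p-1)/2}\otimes_\Al\V)_{t(\alpha)},\W_{h(\alpha)}\bigr)$. The key observation is that the first argument of each of these local Ext sheaves is locally free as an $\Ol_X$-module: indeed $\Al^{r}\otimes_\Al\V$ is a quotient-type object but more to the point each $\Al^r$ is locally free by (K1) and $\V$ is locally free by hypothesis, so $\Al^r\otimes_\Al\V$ is locally free over $\Ol_X$ (this needs a small local computation using that $\Al^r\otimes_\Al\V$ localizes correctly; one can instead note $(\Al^r\otimes_\Al\V)_i$ is a direct summand of $\Al^r\otimes_{\Ol_X}$ something locally free); similarly $\Ml_\alpha$ is locally free by the standing assumption of this section, so the tensor product appearing in the odd case is locally free too. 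Hence $\lExt^q_{\Ol_X}(-,-)$ of these arguments vanishes for $q>0$, because a locally free $\Ol_X$-module is locally projective and $\lExt^q_{\Ol_X}(\F,\G)=0$ for $q>0$ when $\F$ is locally projective. Therefore $\El_1^{p,q}(\V,\W)=0$ whenever $q>0$.

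Given this vanishing, the spectral sequence of part (a) of Theorem \ref{thm-spect-seq} has only the bottom row $q=0$ nonzero on page one. All higher differentials $d_r$ for $r\geq 1$ either originate from or land in a zero entry once we are past the first page — more precisely, $\El_2^{p,q}$ is already concentrated in $q=0$, and for a first-quadrant spectral sequence concentrated on a single row the sequence degenerates at $E_2$ with $\El_\infty^{p,0}=\El_2^{p,0}=\Hg^p(\El_1^{\bullet,0},d_1)$. By definition $\El_1^{\bullet,0}(\V,\W)=\C^\bullet(\V,\W)$ with differential $d^{\V,\W}_\bullet$, so $\El_2^{p,0}=\Hg^p\bigl(\C^\bullet(\V,\W),d^{\V,\W}_\bullet\bigr)$. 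Finally, since the spectral sequence converges to $\lExt^{p+q}_{\Al}(\V,\W)$ and the only contribution to total degree $n$ comes from $(p,q)=(n,0)$, the induced filtration on $\lExt^n_{\Al}(\V,\W)$ has a single nonzero graded piece, giving the claimed isomorphism $\lExt^p_{\Al}(\V,\W)\simeq\Hg^p\bigl(\C^\bullet(\V,\W),d^{\V,\W}_\bullet\bigr)$.

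The main obstacle I expect is the local-freeness claim for $\Al^{r}\otimes_\Al\V$ as an $\Ol_X$-module, and more precisely for the summands $(\Al^r\otimes_\Al\V)_i$ and for $\Ml_\alpha\otimes_{\Ol_X}(\Al^r\otimes_\Al\V)_{t(\alpha)}$: one must check carefully that the tensor-over-$\Al$ operation interacts well with the $\Ol_X$-module structure and with the idempotent decomposition \eqref{eq-V=sum_iV_i}, so that local projectivity (equivalently, the vanishing of $\lExt^{>0}_{\Ol_X}$ with this object in the first slot) really does hold. Once that is in hand, the rest is the standard fact that a first-quadrant spectral sequence whose $E_1$ (or $E_2$) page is concentrated in one row degenerates and identifies the abutment with the cohomology of that row, which requires no further work.
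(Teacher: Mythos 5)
Your overall strategy is the same as the paper's: show that $\El_1^{p,q}(\V,\W)=0$ for $q>0$, so that the spectral sequence of Theorem \ref{thm-spect-seq}(a) collapses onto the row $q=0$ and the abutment is identified with $\Hg^p(\C^\bullet(\V,\W),d^{\V,\W})$. The spectral-sequence bookkeeping at the end of your argument is fine. The gap is precisely at the step you yourself flag as ``the main obstacle'': your justification of the vanishing rests on the claim that $\Al^r\otimes_{\Al}\V$ is locally free as an $\Ol_X$-module because $\Al^r$ and $\V$ both are. This does not follow: the tensor product is taken over $\Al$, not over $\Ol_X$, so $\Al^r\otimes_{\Al}\V$ is a \emph{quotient} of the locally free sheaf $\Al^r\otimes_{\B}\V$ (by the relations $sa\otimes v - s\otimes av$), and quotients of locally free modules are in general not locally free. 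Your fallback remark that $(\Al^r\otimes_\Al\V)_i$ is ``a direct summand of $\Al^r\otimes_{\Ol_X}$ something locally free'' mistakes this quotient for a direct summand. So the central vanishing $\El_1^{p,q}=0$ for $q>0$ is left unproved.

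The paper avoids asserting local freeness altogether. What it actually proves is the weaker but sufficient statement that the functors $\lHom_{\Ol_X}\big((\Al^{p}\otimes_{\Al}\V)_{i},-\big)$ and $\lHom_{\Ol_X}\big(\Ml_\alpha\otimes_{\Ol_X}(\Al^{p}\otimes_{\Al}\V)_{t(\alpha)},-\big)$ are exact, whence their derived functors $\lExt^{q}_{\Ol_X}$ vanish for $q>0$. The route is: (K1) gives exactness of $\lHom_{\Ol_X}(\Kl^p,-)$; Corollary \ref{cor-GoKi} applied to the relation-free algebra $\Ml Q$ reduces $\lExt^{\geq 1}_{\Ml Q}(\Kl^p,-)$ to $\coker\gamma_{\Kl^p,-}$, which vanishes by the surjectivity statement in Proposition \ref{lm-elem-ex-seq-V}; hence $\lHom_{\Ml Q}(\Kl^p,-)$ is exact, and by Lemma \ref{lm-iso-pi_(p*)} so is $\lHom_{\Al}(\Al^p,-)$. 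Finally the adjunction $\lHom_{\Ol_X}(\Al^p\otimes_\Al\V,-)\simeq\lHom_{\Al}\big(\Al^p,\lHom_{\Ol_X}(\V,-)\big)$ together with the exactness of $\lHom_{\Ol_X}(\V,-)$ (here is where local freeness of $\V$ enters) gives the required exactness; the summand decomposition \eqref{eq-V=sum_iV_i} and local freeness of the $\Ml_\alpha$ handle the two shapes of entries. You need to supply this chain (or an equivalent argument); without it the degeneration of the spectral sequence is not established.
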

\begin{proof}
We show that the functors
\begin{equation}
\lHom_{\Ol_{X}}\big((\Al^{p}\otimes_{\Al}\V)_{i},-\big)\qquad\text{and}\qquad\lHom_{\Ol_{X}}\big(\Ml_{\alpha}\otimes_{\Ol_{X}}(\Al^{p}\otimes_{\Al}\V)_{t(\alpha)},-\big)
\label{eq-two-exact-funct}
\end{equation}
are exact for all $i\in I$, $\alpha\in E$ and $p\geq0$. 
From the exact sequence \eqref{eq-ex-seq-p},   using condition (K1)  and   arguing  by  induction, one sees that the functor $\lHom_{\Ol_{X}}(\Kl^{p},-)$ is exact for all $p\geq 0$. Hence Corollary \ref{cor-GoKi} implies that for all left $\Ml Q$-modules $\W$ one has
\begin{equation}
\lExt^{i}_{\Ml Q}(\Kl^{p},\W)\simeq
\begin{cases}
\coker\gamma_{\Kl^{p},\W} &\qquad\text{if}\quad i=1\\
0 &\qquad\text{if}\quad i>1
\end{cases}
\end{equation}
for all $p\geq0$. Proposition \ref{lm-elem-ex-seq-V} implies that the functor $\lHom_{\Ml Q}(\Kl^{p},-)$ is exact for all $p\geq0$.

Since $\Al$ is a quotient of $\Ml Q$, 
using   Lemma \ref{lm-iso-pi_(p*)}  one deduces a   natural isomorphism of functors
\begin{equation}
\lHom_{\Al}(\Al^{p},-)\simeq\lHom_{\Ml Q}(\Al^{p},{}_{\Ml Q}(-))\simeq\lHom_{\Ml Q}(\Kl^{p},{}_{\Ml Q}(-))\,,
\end{equation}
where ${}_{\Ml Q}(-)\colon\modd{\Al}\longrightarrow \modd{\Ml Q}$ is the restriction of scalars. Since ${}_{\Ml Q}(-)$ is exact,  the functor $\lHom_{\Al}(\Al^{p},-)$ is exact as well. 
Let $\V$ be a left $\Al$-module; one has an isomorphism  
\begin{equation}
\lHom_{\Ol_{X}}(\Al^{p}\otimes_{\Al}\V,-)\simeq\lHom_{\Al}(\Al^{p},\lHom_{\Ol_{X}}(\V,-))\,.
\end{equation}
It follows that when  $\V$   is locally free as an $\Ol_{X}$-module,  the functor $\lHom_{\Ol_{X}}(\Al^{p}\otimes_{\Al}\V,-)$ is exact. Since $(\Al^{p}\otimes_{\Al}\V)_{i}$ is a direct summand of $\Al^{p}\otimes_{\Al}\V$ and   the sheaves $\Ml_{\alpha}$ are locally free $\Ol_{X}$-modules,    the claim is proved.  
%\cite[Prop.~I.3.4]{Hi}.

The exactness of the functors \eqref{eq-two-exact-funct} implies that $\El^{p,q}_{1}(\V,\W)= 0$ if $q>0$. One deduces easily that the spectral sequence in eq.~\eqref{eq-loc-E_1^(p,q)} stabilizes at the second step, and that
\begin{equation}
\El^{p,q}_{\infty}=
\begin{cases}
\El^{p,0}_{2} & \text{if $q=0$}\\
0 & \text{if $q>0$}
\end{cases}\qquad\simeq\qquad
\begin{cases}
\Hg^{p}\big(\C^{\bullet}(\V,\W),d^{\V,\W}_{\bullet}\big) &\qquad\text{if $q=0$}\\
0 &\qquad\text{if $q>0$}
\end{cases}
\end{equation}
The thesis follows from Theorem \ref{thm-spect-seq}.
\end{proof}

\begin{ex}
Let $\Bbbk$ be a field, and   $J\subseteq\Bbbk Q$   a two-sided ideal. The  short exact sequence of $\Bbbk Q$-bimodules
\begin{equation}
\xymatrix{
0 \ar[r] & J \ar[r] & \Bbbk Q \ar[r] & \Lambda \ar[r] & 0
}\,.
\label{eq-ex-seq-Lambda}
\end{equation}
defines  a $\Bbbk Q$-algebra structure on $\Lambda$. 
Denote $\Lambda^{p}=J^{p}/J^{p+1}$ for all $p\geq0$. 
We assume that $\Ol_{X}$ is a sheaf of $\Bbbk$-algebras;
tensoring  \eqref{eq-ex-seq-Lambda} by $\Ol_X$ over $\Bbbk$  we obtain 
\begin{equation}
\xymatrix{
0 \ar[r] & \Kl \ar[r] & \Ol_{X} Q \ar[r] & \Al \ar[r] & 0
}\,,
\label{eq-ex-seq-Lambda-tw}
\end{equation}
where   $\Kl=J\otimes_{\Bbbk}\Ol_{X}$ and $\Al=\Lambda\otimes_{\Bbbk}\Ol_{X}$. 
 
The conditions (K1) and (K2) required by Theorem \ref{thm-spect-seq}
  are fulfilled. Indeed, condition (K1) is obvious, as $\Al^{p}=\Lambda^{p}\otimes_{\Bbbk}\Ol_{X}$. 
Concerning the condition (K2), as $\Bbbk Q$ is both a left and a right hereditary ring, $J$ is projective (hence flat) both as a left  and right $\Bbbk Q$-module. {In particular there exists a right $\Bbbk Q$-module $L$  such that there is a  right $\Bbbk Q$-module isomorphism}
\begin{equation}
{J\oplus L\simeq(\Bbbk Q)^{\oplus\Xi}\,,}
\label{eq-J-r-proj}
\end{equation}
{for some   index set $\Xi$ i. 
Tensoring by $\Ol_X$ over $\Bbbk$ one gets a right $\Ol_{X} Q$-module isomorphism}
\begin{equation}
{\Kl\oplus (L\otimes_{\Bbbk} \Ol_{X})\simeq(\Ol_{X}Q)^{\oplus\Xi}\,.}
\label{eq-K-r-proj}
\end{equation}
{so that $\Kl$ is a projective right $\Ol_{X} Q$-module, and this implies condition (K2).}
\label{esempioma}
\end{ex}

When  $X = \operatorname{Spec}{\Bbbk}$ one has:

\begin{prop}
\label{cor-spect-seq-Lambda}
Let $V$ and $W$ be two left $\Al$-modules.  Then  for all $p\geq0$
\begin{equation}
\Ext^{p}_{\Al}(V,W)\simeq\Hg^{p}\big(\C^{\bullet}(V,W),d^{V,W}\big)\,.
\end{equation}
\end{prop}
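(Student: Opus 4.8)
The plan is to read Proposition \ref{cor-spect-seq-Lambda} as the affine specialisation of Corollary \ref{cor-spect-seq-V-loc-free}, in which every technical hypothesis becomes automatic. First I would note that on the one-point space $X=\Spec\Bbbk$ the structure sheaf is just $\Bbbk$, so that an $\Ol_X$-module is nothing but a $\Bbbk$-vector space, hence free. In particular the $\Ol_X$-module $V$ is locally free, the sheaves $\Ml_\alpha$ are locally free (as required throughout Section \ref{ss}), and the $\Ol_X$-modules $\Al^p=\Kl^p/\Kl^{p+1}$ are locally free for every $p\geq 0$; thus condition (K1) holds with nothing to prove.

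Next I would take care of condition (K2). One route is simply to invoke Example \ref{esempioma}, of which the present setting is the particular case $\Ol_X=\Bbbk$, $\Al=\Lambda$. A self-contained alternative is to observe that $\Ml Q=T_\B\Ml$ is the tensor algebra of a bimodule over the semisimple $\Bbbk$-algebra $\B=\bigoplus_{i\in I}\Bbbk e_i$, hence a hereditary ring; consequently every right ideal of $\Ml Q$, and in particular $\Kl$, is projective, a fortiori flat, as a right $\Ml Q$-module. Either way, conditions (K1)--(K2) are satisfied and $V$ is locally free over $\Ol_X$, so Corollary \ref{cor-spect-seq-V-loc-free} applies to the pair $(V,W)$ and yields isomorphisms $\lExt^p_{\Al}(V,W)\simeq\Hg^p\big(\C^\bullet(V,W),d^{V,W}\big)$ of sheaves on $X$, for all $p\geq 0$.

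Finally I would pass from local to global. Since $X$ is a point, the global-sections functor is an equivalence between $\Ol_X$-modules and $\Bbbk$-vector spaces; under it the sheafy local Ext $\lExt^p_{\Al}(V,W)$ is identified with the module $\Ext^p_{\Al}(V,W)$, and the complex of sheaves $\C^\bullet(V,W)$ with the complex of $\Bbbk$-modules of the same name occurring in the statement, compatibly with the differentials $d^{V,W}$. Taking global sections of the isomorphism above therefore gives the asserted $\Ext^p_{\Al}(V,W)\simeq\Hg^p\big(\C^\bullet(V,W),d^{V,W}\big)$. I do not foresee a genuine obstacle here: the only step that is not a tautology is the flatness claim (K2), and even that is covered either by Example \ref{esempioma} or by the heredity of tensor algebras over semisimple rings.
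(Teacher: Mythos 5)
Your proposal is correct and follows essentially the same route as the paper, whose proof is precisely ``verify (K1) and (K2) as in Example \ref{esempioma}, then apply Corollary \ref{cor-spect-seq-V-loc-free}.'' The extra details you supply --- that over $\Spec\Bbbk$ every module is free so (K1) and the local freeness of $V$ are automatic, the hereditary-ring argument for (K2), and the identification of $\lExt$ with $\Ext$ over a point --- are exactly the steps the paper leaves implicit.
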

\begin{proof} One shows that conditions (K1) and (K2) are verified as in Example \ref{esempioma}, so that
Corollary  \ref{cor-spect-seq-V-loc-free} applies.
\end{proof}

\bigskip\section{Hypercohomology}

When $\V$ is a locally free $\Ol_X$-module one is able to compute
the functors $\Ext^{p}_{\Al}(\V,-)$ as the hypercohomology of a complex. This generalizes Theorem 5.1 in \cite{GoKi}.
\begin{thm}
\label{thm-hyper}
Let $\V$ and $\W$ be two left $\Al$-modules. Suppose that the conditions {\rm (K1)} and {\rm (K2)} in Theorem \ref{thm-spect-seq} are satisfied, and suppose that $\V$ is locally free as an $\Ol_{X}$-module. Then one has
\begin{equation}
\Ext^{p}_{\Al}(\V,\W)\simeq\Hb^{p}\big(\C^{\bullet}(\V,\W)
,d^{\V,\W}
\big)
\end{equation}
for all $p\geq0$.
\end{thm}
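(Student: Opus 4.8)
The plan is to exhibit one complex that computes \emph{both} sides simultaneously, namely $\Hom_{\Al}(\V,\T^{\bullet})$, where $\T^{\bullet}$ is the resolution of $\W$ by injective left $\Al$-modules produced in \eqref{eq-T*} (Lemma \ref{lm-T*-inj-res}). By construction $\Ext^{p}_{\Al}(\V,\W)=H^{p}\bigl(\Hom_{\Al}(\V,\T^{\bullet})\bigr)$, so everything reduces to identifying this with $\Hb^{p}\bigl(X,\C^{\bullet}(\V,\W)\bigr)$. Write $\Hom_{\Al}(\V,\T^{\bullet})=\Gamma\bigl(X,\lHom_{\Al}(\V,\T^{\bullet})\bigr)$; since $\T^{\bullet}=\tot\bigl(\C^{\bullet,\bullet}(\W)\bigr)$ and $\lHom_{\Al}(\V,-)$ commutes with the direct sums in the totalization (finite, the double complex being in the first quadrant), $\lHom_{\Al}(\V,\T^{\bullet})=\tot\bigl(\lHom_{\Al}(\V,\C^{\bullet,\bullet}(\W))\bigr)$. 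Thus the theorem will follow from two claims: (i) each $\lHom_{\Al}(\V,\T^{p})$ is $\Gamma(X,-)$-acyclic; (ii) the complex $\lHom_{\Al}(\V,\T^{\bullet})$ is quasi-isomorphic to $\C^{\bullet}(\V,\W)$ in the bounded-below derived category of $\Ol_{X}$-modules. Granting (i) and (ii), an acyclic resolution computes hypercohomology by its global sections, so $\Hb^{p}\bigl(X,\C^{\bullet}(\V,\W)\bigr)=H^{p}\bigl(\Gamma(X,\lHom_{\Al}(\V,\T^{\bullet}))\bigr)=H^{p}\bigl(\Hom_{\Al}(\V,\T^{\bullet})\bigr)=\Ext^{p}_{\Al}(\V,\W)$.

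For (i) I would prove that $\lHom_{\Al}(\V,\I)$ is flasque for every injective left $\Al$-module $\I$. For open sets $U'\subseteq U$, with $j\colon U'\hookrightarrow U$, the restriction map $\Hom_{\Al|_{U}}(\V|_{U},\I|_{U})\to\Hom_{\Al|_{U'}}(\V|_{U'},\I|_{U'})=\Hom_{\Al|_{U}}\bigl(j_{!}(\V|_{U'}),\I|_{U}\bigr)$ is precomposition with the canonical monomorphism $j_{!}(\V|_{U'})\hookrightarrow\V|_{U}$, hence surjective since $\I|_{U}$ is injective. So $\lHom_{\Al}(\V,\T^{p})$ is flasque, therefore $\Gamma(X,-)$-acyclic.

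For (ii), view $\lHom_{\Al}(\V,\C^{\bullet,\bullet}(\W))$ as a first-quadrant double complex of $\Ol_{X}$-modules. By the Hom–tensor adjunction used in \eqref{eq-iso-El_0^(p,q)}, together with $\Al^{r}_{i}\otimes_{\Al}\V\simeq(\Al^{r}\otimes_{\Al}\V)_{i}$, its $p$-th column is the complex $q\mapsto\bigoplus_{i\in I}\lHom_{\Ol_{X}}\bigl((\Al^{p/2}\otimes_{\Al}\V)_{i},\W^{q}_{i}\bigr)$ for $p$ even, and the analogous one with the arrows for $p$ odd. Now each $\W^{\bullet}_{i}=e_{i}\W^{\bullet}$ is an injective $\Ol_{X}$-resolution of $\W_{i}$, and — this is where the hypothesis that $\V$ be locally free over $\Ol_{X}$ enters — the computation made in the proof of Corollary \ref{cor-spect-seq-V-loc-free} shows that the functors $\lHom_{\Ol_{X}}\bigl((\Al^{p}\otimes_{\Al}\V)_{i},-\bigr)$ and $\lHom_{\Ol_{X}}\bigl(\Ml_{\alpha}\otimes_{\Ol_{X}}(\Al^{p}\otimes_{\Al}\V)_{t(\alpha)},-\bigr)$ are exact; hence the $p$-th column has cohomology $\C^{p}(\V,\W)$ in degree $0$ and $0$ elsewhere, i.e., augmented by the monomorphism $\C^{p}(\V,\W)\hookrightarrow\lHom_{\Al}(\V,\C^{p,0}(\W))$ induced by $\W\hookrightarrow\W^{0}$ it is a resolution of $\C^{p}(\V,\W)$. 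These augmentations are compatible with the horizontal differentials (the $\Al$-linear differentials of the complexes \eqref{eq-C(V)} attached to each $\W^{q}$), so they assemble into a morphism of complexes $\C^{\bullet}(\V,\W)\to\tot\bigl(\lHom_{\Al}(\V,\C^{\bullet,\bullet}(\W))\bigr)=\lHom_{\Al}(\V,\T^{\bullet})$; filtering the double complex by columns produces a spectral sequence with $E_{1}$-page $\C^{\bullet}(\V,\W)$ sitting in the row $q=0$, $d_{1}=d^{\V,\W}$, degenerate at $E_{2}$, and the usual comparison identifies the displayed morphism with the ensuing quasi-isomorphism.

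The heart of the matter — and the only essential use of ``$\V$ locally free over $\Ol_{X}$'' — is claim (ii): one must know the columns of $\lHom_{\Al}(\V,\C^{\bullet,\bullet}(\W))$ have cohomology concentrated in degree $0$, which rests on the exactness of the $\lHom$-functors out of $(\Al^{p/2}\otimes_{\Al}\V)_{i}$ and $\Ml_{\alpha}\otimes_{\Ol_{X}}(\Al^{(p-1)/2}\otimes_{\Al}\V)_{t(\alpha)}$ proved inside the proof of Corollary \ref{cor-spect-seq-V-loc-free}, plus the routine (if bookkeeping-heavy) check that the column resolutions glue coherently and that the edge morphism of the column filtration is the stated quasi-isomorphism. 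Everything else is formal double-complex algebra and the principle that acyclic resolutions compute hypercohomology. As a consistency check, the same statement can be reached by simplifying the spectral sequence of Theorem \ref{thm-spect-seq}\,(b): under the present hypotheses its first page reduces — via the local-to-global spectral sequence for $\Ext_{\Ol_{X}}$ and the vanishing of the pertinent $\lExt$-sheaves — to $E_{1}^{p,q}(\V,\W)\simeq H^{q}\bigl(X,\C^{p}(\V,\W)\bigr)$ with $d_{1}$ induced by $d^{\V,\W}$, which is precisely the first hypercohomology spectral sequence of $\C^{\bullet}(\V,\W)$; matching the abutments, though, amounts to the double-complex argument above.
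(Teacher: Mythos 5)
Your proof is correct and follows essentially the same route as the paper: both identify $\lHom_{\Al}(\V,\T^{\bullet})$ with the total complex of $\El_{0}^{\bullet,\bullet}(\V,\W)$, use the exactness of the functors established in the proof of Corollary \ref{cor-spect-seq-V-loc-free} (the only place where local freeness of $\V$ enters) to obtain the quasi-isomorphism with $\C^{\bullet}(\V,\W)$, and invoke Lemma \ref{lm-T*-inj-res} to recognize the cohomology of the global sections as $\Ext^{p}_{\Al}(\V,\W)$. The single point of divergence is how $\Gamma$-acyclicity of the terms is justified: you show $\lHom_{\Al}(\V,\I)$ is flasque for $\I$ an injective left $\Al$-module via the $j_{!}$ adjunction, whereas the paper shows the sheaves $\El_{0}^{p,q}(\V,\W)$ are injective $\Ol_{X}$-modules using the flatness of $(\Al^{p}\otimes_{\Al}\V)_{i}$ and the Hom--tensor adjunction; both arguments are valid and lead to the same conclusion.
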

\begin{proof}

We pick up a resolution $(\W^{\bullet},\partial)$ of $\W$ by injective left $\Al$-modules:
\begin{equation}
\xymatrix{
0 \ar[r] & \W \ar[r]^-{\varepsilon} & \W^{0} \ar[r]^-{\partial_{0}} & \W^{1} \ar[r]^-{\partial_{1}} & \W^{2} \ar[r]^-{\partial_{2}} & \cdots
}
\end{equation}
Since $\Al$ is flat as an $\Ol_{X}$-module, \cite[Cor.~3.6A]{Lam} implies that $(\W^{\bullet},\partial)$ is also a resolution of $\W$ by injective $\Ol_{X}$-modules. Hence one can use $(\W^{\bullet},\partial)$ to build up the spectral sequences in Theorem \ref{thm-spect-seq}.

The exactness of the functors \eqref{eq-two-exact-funct} implies that $\El^{p,q}_{1}(\V,\W)= 0$ if $q>0$ (see eq.~\eqref{eq-loc-E_1^(p,q)}), so that the double complex $\El^{\bullet,\bullet}_{0}(\V,\W)$ introduced in eqs.~\eqref{eq-iso-El_0^(p,q)}, \eqref{eq-El_0^(p,q)-d} and \eqref{eq-El_0^(p,q)-partial} is a resolution of $\C^{\bullet}(\V,\W) = \El_{1}^{\bullet,0}(\V,\W)$:
\begin{equation}
\xymatrix{
0 \ar[r] & \C^{\bullet}(\V,\W) \ar[r] & \El^{\bullet,\bullet}_{0}(\V,\W)\,.
}
\label{eq-C*--->E**}
\end{equation}
Let $\big(\El^{\bullet}_{0}(\V,\W),\Delta)$ be the total complex associated to the double complex $(\El^{\bullet,\bullet}_{0}(\V,\W),\hat d,\hat \partial)$. In particular
\begin{equation}
\El^{n}_{0}(\V,\W)=\bigoplus_{p+q=n}\El^{pq}_{0}(\V,\W)
\end{equation}
% By \cite[Lemma 8.5]{Voisin} 
The morphism \eqref{eq-C*--->E**} induces a quasi isomorphism
\begin{equation}
\xymatrix{
\C^{\bullet}(\V,\W) \ar[r] & \El^{\bullet}_{0}(\V,\W)
}\,.
\label{eq-C*--->E*}
\end{equation}
Since the sheaves $\Al^{p}$ are flat   right $\Al$-modules, as shown in the proof of Lemma \ref{lm-C(V)-inj},  the  $\Ol_{X}$-modules $\Al^{p}\otimes_{\Al}\V$ are flat   for all $p\geq0$. The sheaves $\Al^{p}\otimes_{\Al}\V$ can be decomposed according to eq.~\eqref{eq-V=sum_iV_i}, 
% and \cite[Proposition 4.2]{Lam} implies 
so that the sheaves $(\Al^{p}\otimes_{\Al}\V)_{i}$ are flat $\Ol_{X}$-modules    for all $p\geq0$ and $i\in I$. So again by the Hom-tensor product adjunction, as at the end of the proof of Lemma \ref{lm-C(V)-inj},
the sheaves $\El^{p,q}_{0}(\V,\W)$ are injective   $\Ol_{X}$-modules for all $p,q\geq0$
(cf.~\cite[Lemma 3.5]{Lam}).  
%In turn,  \cite[Proposition 3.4]{Lam} implies  
The sheaves $\El^{n}_{0}(\V,\W)$ are injective   $\Ol_{X}$-modules for all $n\geq0$. Taking global sections in the complex $\El^{\bullet}_{0}(\V,\W)$ and taking cohomology one obtains
\begin{equation}
H^{p}\big(\Gamma\big(\El^{\bullet}_{0}(\V,\W)\big)\big)\simeq\mathbb{R}^{p}\Gamma\big(\C^{\bullet}(\V,\W)\big)=\mathbb{H}^{p}\big(\C^{\bullet}(\V,\W)\big)\,.
\label{eq-H*(C*)1}
\end{equation}
Since
\begin{equation}
\El^{p,q}_{0}(\V,\W)=\lHom_{\Al}(\V,\C^{p,q}(\W))
\end{equation}
(see eq.
\eqref{eq-iso-El_0^(p,q)}), it follows that
\begin{align}
\El^{n}_{0}(\V,\W)&=\lHom_{\Al}\left(\V,\mbox{$\bigoplus_{p+q=n}$}\C^{p,q}(\W)\right)=\lHom_{\Al}(\V,\T^{n})\,, \end{align}
whence \begin{align} \Gamma\big(\El^{n}_{0}(\V,\W)\big)&=\Hom_{\Al}(\V,\T^{n})
\label{eq-Gamma(E*)=Hom}\,,
\end{align}
where $(\T^{\bullet},D_{\bullet})$ is the total complex associated to $(\C^{\bullet,\bullet}(\W),d,\partial)$ (see eq.~\eqref{eq-T*}). Lemma \ref{lm-T*-inj-res} states that $(\T^{\bullet},D_{\bullet})$ is a resolution of $\W$ by injective left $\Al$-modules. Eqs.~\eqref{eq-H*(C*)1} and \eqref{eq-Gamma(E*)=Hom} imply that
\begin{equation}
\mathbb{H}^{p}\big(\C^{\bullet}(\V,\W)\big)\simeq H^{p}\big(\Hom_{\Al}(\V,\T^{\bullet})\big)\simeq \Ext_{\Al}^{p}(\V,\W)\,.
\label{eq-H*(C*)2}
\end{equation}
\end{proof}

\section{An example: ADHM sheaves}\label{adhm}
We consider an example where the quiver $Q$ is the doubled Jordan quiver (see Figure \ref{2}). So we have $\B = \Ol_X$ and $\Ml = \Ml_1\oplus \Ml_2$;
we assume that $\Ol_X$ is a sheaf of $\Bbbk$-algebras for a field $\Bbbk$, and 
that $\Ml_\alpha$ is locally free of finite rank. Moreover, we take for $\Kl$ the ideal generated by the commutators
$[m_1,m_2]$, where $m_\alpha$ is a section of $\Ml_\alpha$. 

\begin{rem} In this case, if the bundles $\Ml_\alpha$ have rank one, the category of left $\Al$-modules   is the category of ADHM sheaves considered by Diaconescu
\cite{Diacu}, with trivial framing (i.e., with zero framing sheaf).
\end{rem}

\begin{prop} The conditions {\em (K1)} and {\em (K2)} are satisfied.
\end{prop}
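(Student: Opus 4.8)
The plan is to verify conditions (K1) and (K2) directly for the doubled Jordan quiver with the commutator ideal, exploiting the fact that in this case the twisted quiver algebra $\Ml Q$ is very close to a polynomial ring. Recall that here $\B=\Ol_X$, the vertex set is a single point, and $\Ml=\Ml_1\oplus\Ml_2$ with each $\Ml_\alpha$ locally free of finite rank. Thus $\Ml Q = T_{\Ol_X}(\Ml_1\oplus\Ml_2)$ is the free tensor algebra on two locally free sheaves, and $\Kl$ is the two-sided ideal generated by the commutators $m_1 m_2 - m_2 m_1$. The quotient $\Al = \Ml Q/\Kl$ is then (locally) a symmetric-algebra-type object: $\Al \cong \operatorname{Sym}_{\Ol_X}(\Ml_1\oplus\Ml_2)$ when the $\Ml_\alpha$ commute appropriately, so $\Al$ is a locally free $\Ol_X$-algebra and very well behaved.

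For condition (K1), I would show that $\Al^p = \Kl^p/\Kl^{p+1}$ is locally free over $\Ol_X$ by passing to a trivializing open set $U$ where $\Ml_\alpha|_U \cong \Ol_U^{\oplus r_\alpha}$. Over such $U$, $\Ml Q|_U$ is the free associative $\Ol_U$-algebra on $r_1+r_2$ generators, and $\Al|_U$ is its quotient by the ideal generated by a generic pencil of commutators among the generators. The associated graded pieces $\Al^p$ then admit an explicit $\Ol_U$-basis indexed by combinatorial data (in the rank-one case, by monomials $m_1^a m_2^b$ with $a+b = p$; in general by an analogous PBW-type basis), hence are free $\Ol_U$-modules. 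Since freeness is local, $\Al^p$ is locally free over $\Ol_X$, giving (K1). Alternatively, and more cleanly, one observes that the filtration of $\Al$ by powers of $\Kl$ (equivalently, by degree inherited from $\Ml Q$) has associated graded ring $\operatorname{Sym}_{\Ol_X}(\Ml_1\oplus\Ml_2)$, whose graded pieces $\operatorname{Sym}^p$ of a locally free sheaf are locally free.

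For condition (K2), I would argue as in Example \ref{esempioma}: it suffices to show $\Kl$ is flat — indeed projective — as a right $\Ml Q$-module. Locally over a trivializing $U$, the algebra $\Ml Q|_U$ is a free associative algebra over $\Ol_U$, and free associative algebras over a commutative base are (bi)hereditary in a suitable sense, so that the two-sided ideal $\Kl|_U$ is projective as a one-sided $\Ml Q|_U$-module; equivalently, $\Kl|_U$ is a direct summand of a free right $\Ml Q|_U$-module. Flatness is a local property, so $\Kl$ is flat as a right $\Ml Q$-module, which is exactly (K2). If one prefers a global argument, one can invoke the structure of $\Kl$ as generated by the finitely many commutator sections and exhibit directly a projective resolution (the commutators give a ``Koszul-type'' syzygy, and for two variables there are no higher syzygies), mirroring the isomorphism \eqref{eq-K-r-proj} in Example \ref{esempioma}.

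The main obstacle I anticipate is making the ``locally a free/symmetric algebra'' claim fully rigorous over a general ringed space: one must check that the local trivializations of the $\Ml_\alpha$ patch compatibly with the ideal $\Kl$ and the filtration, and that the PBW/monomial bases one writes down over trivializing opens actually glue to well-defined locally free sheaves $\Al^p$ globally. This is essentially bookkeeping — the commutator ideal is canonical, so the gradings and filtrations are canonical too — but it is the step requiring the most care. Once this local model is in hand, both (K1) and (K2) follow exactly as in Example \ref{esempioma}, and Theorem \ref{thm-spect-seq}, Corollary \ref{cor-spect-seq-V-loc-free} and Theorem \ref{thm-hyper} all apply to ADHM sheaves.
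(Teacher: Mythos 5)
There is a genuine gap in your treatment of (K1), stemming from a misidentification of the objects $\Al^p$. By definition $\Al^{p}=\Kl^{p}/\Kl^{p+1}$ is the $p$-th graded piece of the associated graded of $\Ml Q$ along the \emph{relations} ideal $\Kl$; in particular $\Al^{0}=\Ml Q/\Kl=\Al$ itself, and each $\Al^{p}$ is an $\Al$-bimodule, typically of infinite rank over $\Ol_X$. It is \emph{not} the degree-$p$ piece of $\Al$ under a degree grading, so your proposed basis ``monomials $m_1^{a}m_2^{b}$ with $a+b=p$'' and your ``cleaner'' identification of $\bigoplus_p\Al^p$ with $\operatorname{Sym}_{\Ol_X}(\Ml_1\oplus\Ml_2)$ describe the wrong object. (Separately, $\Al$ is not a symmetric algebra once $r_1$ or $r_2$ exceeds one: $\Kl$ is generated only by the \emph{mixed} commutators $[\alpha_j,\beta_\ell]$, so the loops within each $\Ml_\alpha$ still fail to commute among themselves.) The paper's argument sidesteps any computation of $\Kl^p/\Kl^{p+1}$: after trivializing, $\Ml Q|_U\simeq \Ol_U\otimes_\Bbbk \Bbbk Q_{r_1,r_2}$ and $\Kl|_U\simeq \Ol_U\otimes_\Bbbk J$ with $J$ the corresponding two-sided ideal over the field $\Bbbk$; since $-\otimes_\Bbbk\Ol_U$ is flat, $\Al^{p}|_U\simeq (J^{p}/J^{p+1})\otimes_\Bbbk\Ol_U$ is free (a $\Bbbk$-vector space tensored with $\Ol_U$), and (K1) follows with no basis needed.

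Your (K2) argument is closer to the mark but as stated rests on a false general principle: a free associative algebra over a commutative base $\Ol_U$ is \emph{not} hereditary in general (already $\Ol_U$ itself need not be), so you cannot conclude projectivity of $\Kl|_U$ from heredity ``over a commutative base.'' What saves the day --- and is exactly the route of Example \ref{esempioma}, which you do invoke at the end --- is that heredity is applied to $\Bbbk Q_{r_1,r_2}$ \emph{over the field} $\Bbbk$, yielding $J\oplus L\simeq(\Bbbk Q_{r_1,r_2})^{\oplus\Xi}$ as right modules, and this splitting is then transported by the base change $-\otimes_\Bbbk\Ol_U$ to show $\Kl|_U$ is a projective, hence flat, right $\Ml Q|_U$-module. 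If you replace your local ``PBW/symmetric algebra'' model by the observation that the whole pair $(\Kl,\Al)$ is locally pulled back from $\Spec\Bbbk$, both conditions reduce verbatim to Example \ref{esempioma}, which is the paper's proof.
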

\begin{proof} The two conditions are local, so we can assume that $\Ml_1$ and $\Ml_2$ are free, $\Ml_\alpha \simeq \Bbbk^{r_\alpha} \otimes \Ol_X$. The algebra $\Ml Q$ can be realized as 
$$ \Ml Q = \Ol_X Q_{r_1,r_2} $$
where $Q_{r_1,r_2} $ is the quiver with one vertex and loops $\alpha_1,\dots,\alpha_{r_1},\beta_1,\dots,\beta_{r_2}$, and the ideal sheaf
$\Kl$ in turn is
$$ \Kl\simeq J \otimes_\Bbbk \Ol_X$$
where $J$ is the two-sided ideal of $\Bbbk Q_{r_1,r_2} $ generated by the $r_1r_2$ commutators $[\alpha_j,\beta_\ell]$.  Then the claim follows from the discussion in Example \ref{esempioma}.
\end{proof}

Let $r_1=r_2=1$,  and assume  that $X$ is a scheme over  an algebraically closed field $\Bbbk$. Now $\Lambda=\Bbbk[\alpha_1,\alpha_2]$, and
$\Al = \Lambda\otimes_\Bbbk \Ol_X$ can be thought of as the structure sheaf of the
product $\mathbb A^2_\Bbbk \times_\Bbbk  X$. We may use a result in \cite{SchVas} to prove, under some hypotheses, a vanishing result for the local Ext sheaves. We take
$$\V = \bigoplus_{i\in I} \tilde V_i\boxtimes  \E_i ,\qquad \W = \prod_{j\in J} \tilde W_j \boxtimes \F_j\,,$$
where $\tilde V_i$ and $ \tilde W_j$ are Artinian coherent sheaves on $\mathbb A^2_\Bbbk$,
$\E_i$ and $\F_i$ are locally free $\Ol_X$-modules of finite rank, and the index sets $I$ and $J$ are arbitrary.

\begin{prop}  There are vanishings
$$\lExt^{p}_{\Al}(\V,\W)=0\quad\mbox{for} \quad p\ge 3\,.$$
Moreover, a Serre duality holds:
$$\lExt^{p}_{\Al}(\V,\W) \simeq \lExt^{2-p}_{\Al}(\W,\V)^\vee \quad\mbox{for} \quad p=0,1,2$$
where $^\vee$ denotes duality with respect to $\Ol_X$.
\end{prop}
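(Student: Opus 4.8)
The plan is to reduce the statement to a computation of local Ext sheaves on the smooth affine surface $\mathbb A^2_\Bbbk$, using that $r_1=r_2=1$ forces $\Al=\Bbbk[\alpha_1,\alpha_2]\otimes_\Bbbk\Ol_X$, the structure sheaf of $Y:=\mathbb A^2_\Bbbk\times_\Bbbk X$. The projection $\pi\colon Y\to X$ is affine, hence $\pi_*$ is exact and fully faithful, identifies quasi-coherent sheaves on $Y$ with quasi-coherent left $\Al$-modules, and intertwines $\lExt^{\bullet}_{\Al}$ with $\pi_*\lExt^{\bullet}_{Y}$ (every sheaf we touch admits, locally on $Y$, a finite locally free resolution, so these Ext sheaves present no difficulty). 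Writing $p_1\colon Y\to\mathbb A^2_\Bbbk$, $p_2=\pi\colon Y\to X$ for the projections, $\V_i=p_1^*\tilde V_i\otimes p_2^*\E_i$ and $\W_j=p_1^*\tilde W_j\otimes p_2^*\F_j$, the sheaves of the statement correspond to $\V=\bigoplus_{i\in I}\V_i$ and $\W=\prod_{j\in J}\W_j$, and $\V$ is in addition locally free as an $\Ol_X$-module (each $\tilde V_i$ being Artinian), so that Corollary \ref{cor-spect-seq-V-loc-free} and Theorem \ref{thm-hyper} are available — though for this explicit $\Al$ the computation runs more directly on $Y$. Since $\mathbb A^2_\Bbbk$ is regular of dimension $2$, every coherent sheaf on it has projective dimension $\le 2$; pulling back a length-$\le 2$ locally free resolution of $\tilde V_i$ along the flat map $p_1$ and tensoring with the locally free sheaf $p_2^*\E_i$ (exact, as $\Bbbk$ is a field) shows that $\V$ admits, locally on $Y$, a length-$\le 2$ resolution by locally free $\Al$-modules; such a resolution computes $\lExt^{\bullet}_{\Al}(\V,-)$, so $\lExt^{p}_{\Al}(\V,\W)=0$ for $p\ge 3$ follows at once.

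For the Serre duality I would first treat a single pair $\V_0=p_1^*\tilde V\otimes p_2^*\E$, $\W_0=p_1^*\tilde W\otimes p_2^*\F$, and reassemble afterwards. As $\tilde V$ is coherent on the regular scheme $\mathbb A^2_\Bbbk$ it is perfect there, so flat base change for $R\lHom$ along $p_1$, together with the fact that $p_2^*\E$ and $p_2^*\F$ are locally free of finite rank, gives
\[
R\lHom_{Y}(\V_0,\W_0)\;\simeq\;p_1^*R\lHom_{\mathbb A^2_\Bbbk}(\tilde V,\tilde W)\otimes p_2^*\lHom_{\Ol_X}(\E,\F).
\]
Taking cohomology sheaves and applying $\pi_*$ (projection formula, $H^{>0}(\mathbb A^2_\Bbbk,-)=0$, and the fact that $\lExt^{p}_{\mathbb A^2_\Bbbk}(\tilde V,\tilde W)$ is Artinian coherent, hence is the sheaf attached to the finite-dimensional $\Bbbk$-vector space $\Ext^{p}_{\mathbb A^2_\Bbbk}(\tilde V,\tilde W)$) one obtains
\[
\lExt^{p}_{\Al}(\V_0,\W_0)\;\simeq\;\Ext^{p}_{\mathbb A^2_\Bbbk}(\tilde V,\tilde W)\otimes_\Bbbk\lHom_{\Ol_X}(\E,\F).
\]
This Künneth/base-change identification for local Ext over a relative product over an arbitrary base is exactly the kind of statement supplied by \cite{SchVas}. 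Since $\mathbb A^2_\Bbbk$ is smooth of dimension $2$ with $\omega_{\mathbb A^2_\Bbbk}\simeq\Ol_{\mathbb A^2_\Bbbk}$, and $\tilde V,\tilde W$ have $0$-dimensional, hence proper, support, Serre duality gives $\Ext^{p}_{\mathbb A^2_\Bbbk}(\tilde V,\tilde W)\simeq\Ext^{2-p}_{\mathbb A^2_\Bbbk}(\tilde W,\tilde V)^{\ast}$ for $p=0,1,2$, where ${}^{\ast}$ denotes the $\Bbbk$-linear dual. Substituting, and using $(U\otimes_\Bbbk\mathcal G)^\vee\simeq U^{\ast}\otimes_\Bbbk\mathcal G^\vee$ for $U$ a finite-dimensional $\Bbbk$-vector space and $\mathcal G$ locally free of finite rank, together with $\lHom_{\Ol_X}(\F,\E)^\vee\simeq\lHom_{\Ol_X}(\E,\F)$, one gets $\lExt^{p}_{\Al}(\V_0,\W_0)\simeq\lExt^{2-p}_{\Al}(\W_0,\V_0)^\vee$ for $p=0,1,2$.

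It remains to globalise over $I$ and $J$. A locally free $\Al$-resolution of $\V$ is the direct sum of those of its summands, and $\lHom_{\Al}\bigl(-,\prod_j(\cdot)\bigr)=\prod_j\lHom_{\Al}(-,\cdot)$, so $\lExt^{p}_{\Al}(-,-)$ converts the direct sum over $i$ in the first slot into a product and commutes with the product over $j$ in the second slot; hence $\lExt^{p}_{\Al}(\V,\W)\simeq\prod_{i,j}\lExt^{p}_{\Al}(\V_i,\W_j)$. Combining this with the single-pair isomorphism and with $\bigl(\bigoplus_{i,j}(\cdot)\bigr)^\vee\simeq\prod_{i,j}(\cdot)^\vee$ identifies $\lExt^{p}_{\Al}(\V,\W)$ with the $\Ol_X$-dual of $\bigoplus_{i,j}\lExt^{2-p}_{\Al}(\W_j,\V_i)$, so the only thing left is to recognise this direct sum as $\lExt^{2-p}_{\Al}(\W,\V)$; here one exploits that $\V$ is a perfect $\Al$-module, which makes $\lExt^{\bullet}_{\Al}(-,\V)$ compatible with the infinite product defining $\W$. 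I expect this bookkeeping — reconciling the product in $\W$ against the direct sum in $\V$ under $\Ol_X$-duality — to be the main technical point; the base change is delivered by \cite{SchVas} and the surface duality is classical local duality, so everything else is a chain of routine reductions. (Tor-independence is automatic over the field $\Bbbk$, but the base-change formula still needs some care because $X$ is an arbitrary scheme, not merely $\Spec\Bbbk$.)
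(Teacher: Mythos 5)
Your argument is correct and lands on the same intermediate formula as the paper, namely $\lExt^{p}_{\Al}(\tilde V\boxtimes\E,\tilde W\boxtimes\F)\simeq \Ext^{p}_{\Lambda}(\tilde V,\tilde W)\otimes_\Bbbk\lHom_{\Ol_X}(\E,\F)$ with $\Lambda=\Bbbk[\alpha_1,\alpha_2]$, but it justifies the two key inputs differently. The paper gets the base change purely algebraically from flatness of $\Lambda\to\Al$ (citing \cite[(3.E)]{Mat}) and then invokes \cite[Prop.~3.1]{SchVas} on preprojective algebras for \emph{both} the vanishing in degrees $\ge 3$ and the duality $\Ext^{p}_{\Lambda}(\tilde V,\tilde W)\simeq\Ext^{2-p}_{\Lambda}(\tilde W,\tilde V)^{\ast}$; you instead work geometrically on $Y=\mathbb A^{2}_\Bbbk\times_\Bbbk X$, deduce the vanishing from the fact that $\Bbbk[x,y]$ has global dimension $2$, and deduce the duality from classical Serre/local duality for sheaves with proper ($0$-dimensional) support on a smooth surface with trivial canonical bundle. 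Since the preprojective algebra of the Jordan quiver is exactly $\Bbbk[x,y]$, the Schiffmann--Vasserot statement specializes to the classical one, so your route is a legitimate and more elementary substitute; what the paper's route buys is that it would carry over to quivers whose preprojective algebra is not commutative. Two remarks. First, your attribution of the base-change/K\"unneth step to \cite{SchVas} is off: that reference supplies the Ext duality over the preprojective algebra (its role in the paper), whereas the base change you need is ordinary flat base change for Ext of a perfect complex. Second, the reduction from $\V=\bigoplus_i\tilde V_i\boxtimes\E_i$, $\W=\prod_j\tilde W_j\boxtimes\F_j$ to a single pair --- which you rightly identify as the delicate point, especially for $\lExt^{2-p}_{\Al}(\W,\V)$ where the infinite product sits in the contravariant slot --- is exactly the step the paper dispatches in one sentence (``due to the properties of the local Ext sheaves''), so your treatment is no less complete than the paper's there, and for finite index sets both arguments close without further ado.
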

\begin{proof} 
Due to the properties of the local Ext sheaves, we may assume $\V = \tilde V \boxtimes \Ol_X$
and $\W = \tilde W \boxtimes \Ol_X$. 
Since the morphism $\Lambda\to\Al$ is flat, by base change for the Ext groups \cite[(3.E)]{Mat} one
has
\begin{eqnarray} \lExt^{p}_{\Al}(\V,\W)  & \simeq & 
 \lExt_{\Al}(\tilde V\otimes_\Lambda \Al, \tilde W\otimes_\Lambda \Al ) \\[3pt]
&\simeq & 
 \Ext^p_\Lambda(\tilde V,\tilde W) \otimes_\Lambda \Al \simeq     \Ext^p_\Lambda(\tilde V,\tilde W)  \otimes_\Bbbk \Ol_X\,.
 \end{eqnarray}
 Since the algebra $\Lambda$ is pre-projective in this case, by \cite[Prop.~3.1]{SchVas}  one has $\Ext^p_\Lambda(\tilde V,\tilde W) =0$
for $p\ge 3$ % (note that since $r_1=r_2$ the quiver $ Q_{r_1,r_2} $ is a double, 
(note that $\tilde V$, $\tilde W$ are finite-dimensional over $\Bbbk$), so that the first claim is proved. 
The second follows from the duality $\Ext^p_\Lambda(\tilde V,\tilde W) \simeq \Ext^{2-p}_\Lambda(\tilde W,\tilde V)^\ast$ in \cite{SchVas} ($^\ast$ denotes duality with respect to $\Bbbk$):
\begin{eqnarray} \lExt^{2-p}_\Al(\W,\V)^\vee  & \simeq & \Ext^{2-p}_\Lambda(\tilde W,\tilde V)^\ast \otimes_\Bbbk \Ol_X
\\[3pt] & \simeq & \Ext^p_\Lambda(\tilde V,\tilde W)  \otimes_\Bbbk \Ol_X \simeq \lExt^p_{\Al}(\V,\W)\,.
 \end{eqnarray}
\end{proof}

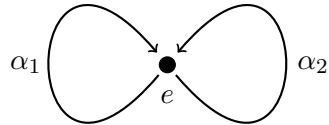
\begin{figure} \begin{center}
\begin{tikzpicture}[scale=6]
 \draw [fill] (0,0) circle (.5pt);
\node [draw=none] {} edge [in=50,out=-50,loop,thick] ();
\node [draw=none] {} edge [in=130,out=230,loop,thick] ();
\node at (0,-0.07) {$e$};
\node at (-0.31,0) {$\alpha_1$};
\node at (0.32,0) {$\alpha_2$};
\end{tikzpicture} 
\end{center} 
\caption{\label{2} The doubled Jordan quiver}
\end{figure}

\bigskip\frenchspacing

\def\cprime{$'$} \def\cprime{$'$} \def\cprime{$'$} \def\cprime{$'$}

%
%\bibliographystyle{siam}
%\bibliography{bibliografia1}

\newpage
\begin{landscape}
 
 \begin{figure}
 \parbox{5cm}{\hfill}
 
 \vfill
 
\begin{equation} \small  
\xymatrix@C-10pt{
&& 0 \ar[d] & & 0  \ar[d] \\
 0 \ar[r] & \lHom_{\Al}(\Al^{p},\V) \ar[r] 
 & \bigoplus_{i\in I}\lHom_{\Ol_{X}}(\Al^{p}_{i},\V_{i}) \ar[rr]^-{\gamma_{\Al^p,\V}} \ar[d] _{\theta_{p,11}}&&
\bigoplus_{\alpha\in E}\lHom_{\Ol_{X}}(\Ml_{\alpha}\otimes\Al^{p}_{t(\alpha)},\V_{h(\alpha)})\ar[r] \ar[d]^{\theta_{p,21}}&
\coker\gamma_{\A^{p},\V} \ar[r] & 0  \\
 0 \ar[r] & \lHom_{\Ml Q}(\Kl^{p},\V)\ar[r] & \bigoplus_{i\in I}\lHom_{\Ol_{X}}(\Kl^{p}_{i},\V_{i})\ar[rr]^-{\gamma_{\Kl^p,\V}}\ar[d]_{\theta_{p,12}} &&
 \bigoplus_{\alpha\in E}\lHom_{\Ol_{X}}(\Ml_{\alpha}\otimes\Kl^{p}_{t(\alpha)},\V_{h(\alpha)}) \ar[d]^{\theta_{p,22}}\\
 0 \ar[r] &  \lHom_{\Ml Q}(\Kl^{p+1},\V) \ar[r]  & \bigoplus_{i\in I}\lHom_{\Ol_{X}}(\Kl^{p+1}_{i},\V_{i})  
  \ar[rr]^-{\gamma_{\Kl^{p+1},\V}} \ar[d] 
  &&
 \bigoplus_{\alpha\in E}\lHom_{\Ol_{X}}(\Ml_{\alpha}\otimes\Kl^{p+1}_{t(\alpha)},\V_{h(\alpha)})\ar[d]  \\
 && 0 && 0 
}
\end{equation}

\bigskip
\caption{\label{1} }
\vfill
  \parbox{5cm}{\hfill}
  \end{figure}
\end{landscape}

\end{document}